\newtheorem{theorem}{Theorem}[section]
\newtheorem{prop}[theorem]{Proposition}
\newtheorem{lemma}[theorem]{Lemma}
\newtheorem{cor}[theorem]{Corollary}
\theoremstyle{definition}
\newtheorem{defn}[theorem]{Definition}
\newtheorem{remark}[theorem]{Remark}
\newenvironment{rem}{\begin{remark}\rm}{\end{remark}}
\newtheorem{example}[theorem]{Example}
\newenvironment{ex}{\begin{example}\rm}{\end{example}}
\def\Z{{\mathbb{Z}}}
\def\R{{\mathbb{R}}}
\mathchardef\shy="2D
\def\fr{{\rm fr}}
\def\cl{{\rm cl}}
\def\v{{\rm v}}
\def\w{{\rm w}}
\begin{document}

\title[Singular Locus Stratification and Closed Geodesics]{The Stratification of Singular Locus and \\
Closed Geodesics on Orbifolds}
\author{George C. Dragomir}
\maketitle

\begin{abstract} In this note, we prove the existence of a closed geodesic of positive length on any compact developable orbifold of dimension 3, 5, or 7. The argument uses the stratification of the singular locus, and reduces the problem of existence of a closed geodesic on a compact developable orbifold to the case of even dimensional orbifolds with zero-dimensional singular locus and orbifold fundamental group infinite torsion and of odd exponent.\\
\end{abstract}

\section*{Introduction}

The study of geodesics on manifolds has been a central topic in global differential geometry since its early beginnings in the 19th century, and it continues to be an active area of investigation today, see \cite{Bur:13Ope:aa} for a recent survey. Indeed, one of the most beautiful problems in Riemannian geometry is the question of existence of closed geodesics, and this has lead to numerous discoveries, notably the development of the calculus of variations in the large  \cite{Mor:96The:aa}  and Morse theory  on manifolds \cite{Mil:63Mor:aa}. For a manifold $M$, the closed geodesics on it are precisely the critical points of the energy functional $E$ defined on the free loop space $\Lambda M$, and Morse theory provides information about the existence and the number of critical points of $E$ in terms of the topology of $\Lambda M$, which is determined entirely by the topology of $M.$

The existence of closed geodesics on closed non-simply connected manifolds was established before the full development of Morse theory. The connected components of the free loop space $\Lambda M$ are in one-to-one correspondence with the conjugacy classes in the fundamental group $\pi_1(M)$, and a classical theorem of Cartan \cite{Car:28Lec:aa} states that if $M$ is closed, then each component of $\Lambda M$ corresponding to a nontrivial homotopy class contains a closed geodesic. The idea of the proof is to shorten a free loop on $M$ within its homotopy class and use the compactness of $M$ to show that this shortening process converges to a loop of minimum length. Because the injectivity radius of a compact manifold is positive, any sufficiently small loop will be contained in an $n$-ball and will therefore be homotopically trivial. Thus if one starts with a loop representing a nontrivial class in $\pi_1(M)$, the shortening process produces a loop of positive length representing a nontrivial closed geodesic on $M.$

The existence of closed geodesics on closed simply connected manifolds is more delicate and here the history is more storied. In 1917, Birkhoff used the variational approach to formulate the ``minimax principle" which was a remarkable breakthrough  and gives existence of closed geodesics on general surfaces \cite{Bir:17Dyn:aa} and on higher dimensional Riemannian manifolds homeomorphic to $\mathbb{S}^n$ (see \cite[p.193]{Bir:27Dyn:aa}). The basic idea is to consider an infinite family of curves on the manifold, choose a representative of maximum length and show how the original family can be deformed in a continuous way so that the length of such representative curves will attain a minimum value. Inspired by Birkhoff's minimax principle, Morse sought to apply the calculus of variations to the problem of existence of geodesics and the result was Morse theory on manifolds \cite{Mor:96The:aa}.

The complete answer to the question of existence of closed geodesics on closed Riemannian manifolds came in 1951 with the celebrated theorem of Fet and Lusternik \cite{Lyu:51Var:aa}. The proof uses the path-loop fibration $\Lambda M \to M$ to show that when $M$ is compact, the homotopy groups of $\Lambda M$ cannot be all trivial, and they then use this fact to find a constraint against which to minimize the energy functional and use it to produce a critical point. The proof relies on Morse's principle of polygonal approximation of the loop space, and in \cite{Kli:78Lec:aa} Klingenberg gives a more general proof of the Fet-Lusternik theorem by applying Morse theory directly to the infinite-dimensional space $\Lambda M$.

In this paper, we will consider the problem of existence of closed geodesics on orbifolds $\mathcal{Q}$, which are topological spaces locally modeled on quotients of a smooth manifold by the action of a finite group. This problem was first considered by Guruprasad and Haefliger in \cite{Gur:06Clo:aa}, who proved existence of closed geodesics for non-developable orbifolds and also for compact developable orbifolds $\mathcal{Q}$ with $\pi_1^{orb}(\mathcal{Q})$ either finite or containing an element of infinite order (cf. \cite[Theorem 5.1.1]{Gur:06Clo:aa}). 

We developed methods to demonstrate existence of geodesics on compact developable Riemannian orbifolds in several new contexts. Recall that an orbifold $\mathcal{Q}$ is called developable if it can be realized as a quotient $\mathcal{Q} = M/\Gamma$ of a complete simply connected Riemannian manifold $M$ by the geometric action of a subgroup $\Gamma \subseteq {\rm Isom}(M)$. (Here, an action is called \emph{geometric} if it is proper, cocompact, and acts by isometries.) In this case $M$ is the universal covering of $\mathcal{Q}$ and $\Gamma \cong \pi_1^{orb}(\mathcal{Q})$ is the orbifold fundamental group. Using the classification of isometries of $M$ we distinguish two classes of elements in $\Gamma$, the hyperbolic ones, which act without fixed points; and the elliptic ones, which act with fixed points. In \cite[Proposition 2.16]{Ale:11On-:aa} and \cite[Theorem 2]{Dra:14Clo:aa} it has been shown that the presence of hyperbolic elements in $\Gamma$ gives closed geodesics of positive length on $\mathcal{Q}.$ Therefore, the problem can be reduced to the case when all the elements in the orbifold fundamental group $\Gamma$ are elliptic. Because elliptic isometries have finite order, and because there are finitely many orbit types, the exponent of $\Gamma$ must be finite. Thus, based on \cite{Gur:06Clo:aa}, the existence problem remains open for compact developable orbifolds whose orbifold fundamental group is a finitely presented Burnside group (finitely generated infinite group of finite exponent). Again, no example of such a group is known, and there is no clear conjecture whether or not such groups exist. In \cite{Dra:14Clo:aa} we showed that infinite torsion groups of finite exponent cannot be realized as the fundamental group of compact orbifolds satisfying certain geometric conditions and, as a consequence, we obtained the existence of closed geodesics of positive length on all compact orbifolds of nonpositive or nonnegative sectional curvature (see  \cite[Corollary 3]{Dra:14Clo:aa}).

Here, we present a completely different approach to the existence problem. Instead of making curvature assumptions on the orbifold $\mathcal{Q},$ we base our arguments on the stratification of the singular locus $\Sigma \subset Q$. Each connected component of this stratification has the structure of a manifold of dimension less than or equal to $\dim(\mathcal{Q}),$ and when $\mathcal{Q}$ is assumed to be Riemannian, we show  that these components are totally geodesic (cf. Propostion \ref{prop:strata}). In general, these totally geodesic manifolds need not be compact, even when the ambient orbifold $\mathcal{Q}$ is, and thus there is no guarantee for the existence of closed geodesics in a given component. However, an interesting feature of this stratification is that we can compactify certain connected components by adding singular points with larger isotropy. The resulting compact space $S$ carries a natural non-effective orbifold structure $\mathcal{S}$ and continues to be totally geodesic in $\mathcal{Q}.$ Moreover, as Theorem \ref{thm:closed_stratum} shows, these spaces with the reduced orbifold structure $\mathcal{S}_{\rm eff}$ are either manifolds, or orbifolds with only zero-dimensional singular locus. Thus, if one can prove the existence of closed geodesics of positive length for compact orbifolds with only zero-dimensional singular locus, then the existence would follow for all compact orbifolds from Theorem \ref{thm:closed_stratum}  by using the totally geodesic suborbifold $\mathcal{S}_{\rm eff}$ (see also Remark \ref{rem:reduction}). 

Reducing the study of existence of closed geodesics to compact orbifolds $\mathcal{Q}$ with only zero-dimensional singular locus presents certain advantages. Because the singular locus of $\mathcal{Q}$ consists of isolated points, each isotropy group admits a free orthogonal action on the sphere $\mathbb{S}^{n-1}$, where $n=\dim(\mathcal{Q}),$ and this imposes additional restrictions on these isotropy groups \cite{Wol:67Spa:aa}. In particular, if $\dim(\mathcal{Q})$ is odd, then all the isotropy groups are cyclic of order two, and Theorem \ref{thm:order2point} implies that $\mathcal{Q}$ admits a closed geodesic of positive length. This fact plays a central role in the proof of Theorem \ref{thm:odd_strata}, which is used to show that all compact orbifolds of dimension 3, 5 or 7 have closed geodesics of positive length (Corollary \ref{cor:dimension_3_5}).

The outline of the paper is as follows. In the first section we review some of the basics of orbifolds and in \S\ \ref{sec:stratification} we describe the natural stratification by orbit type of an orbifold. In \S\ \ref{sec:geodesics01}, we present sufficient conditions on the structure of the singular locus of a general compact orbifold that give existence of closed geodesics, and 
show that the existence problem can be reduced to orbifolds with zero-dimensional singular locus. The case of compact developable orbifolds is considered in \S\ \ref{sec:develop_geodesics}.

 \section{Preliminaries}\label{sec:prelim} 

In this section we collect some basic background on orbifolds and fix the notation. For a comprehensive treatment of this material, the reader is encouraged to refer to the excellent introductions to orbifolds in \cite[Chapter 1]{Ade:07Orb:aa} and \cite[Chapter III.$\mathcal{G}$]{Bri:99Met:aa}, which also include the modern approach to orbifolds, using groupoids. While, for the most part, our presentation follows that of \cite{Ade:07Orb:aa} and \cite{Bri:99Met:aa}, because of our interest in the geometric properties of (developable) orbifolds, we choose to use the traditional approach to orbifolds, in the spirit of \cite{Sat:57The:aa} and \cite{Thu:78The:aa}. 

\subsection{Orbifolds}  

Let $Q$ denote a paracompact Hausdorff topological space and let $\mathcal{U}=\{U_i\}_{i\in\mathcal{I}}$ be a locally finite open cover of $Q$. Fix a non-negative integer $n$. 

An $n$-dimensional \emph{orbifold chart} associated to an open set $U_i\in \mathcal{U}$ is given by a triple $(\widetilde{U}_i,\Gamma_i,\varphi_i)$, where $\widetilde{U}_i$ is a simply connected smooth $n$-manifold, $\Gamma_i\subseteq {\rm Diff}(\widetilde{U}_i)$ is a finite group acting smoothly on $\widetilde{U}_i$, and $\varphi_i\colon \widetilde{U}_i\to U_i$ is a continuous surjective map that induces a homeomorphism from $\widetilde{U}_i/\Gamma_i$ onto the open set $U_i$. 

An $n$-dimensional \emph{orbifold atlas} $\mathcal{A}$ associated to the open cover $\mathcal{U}$ of  $Q$ is a collection of $n$-dimensional orbifold charts $\{(\widetilde{U}_i,\Gamma_i,\varphi_i)\}_{i\in\mathcal{I}}$  which satisfy the following compatibility condition: for all $\tilde{x}_i\in \widetilde{U}_i$ and $\tilde{x}_j\in \widetilde{U}_j$ with $\varphi_i(\tilde{x}_i)=\varphi_j(\tilde{x}_j)$ there exists a diffeomorphism $h\colon \widetilde{U}\to \widetilde{V}$ defined on a connected open neighbourhood $\widetilde{U}$ of $\tilde{x}_i$ onto a neighbourhood $\widetilde{V}$ of $\tilde{x}_j$ such that $\varphi_j\circ h=\varphi_i|_{\widetilde{U}}$. Such a map $h$ is called a \emph{change of charts} and it is well-defined up to composition with an element of $\Gamma_j$ (see Remark \ref{rem:effective}). If $i=j$, then $h$ is the restriction $\gamma|_{\widetilde{U}}$ of an element $\gamma\in\Gamma_i$.

Two orbifold atlases $\mathcal{A}_1=\{(\widetilde{U}_i,\Gamma_i,\varphi_i)\}_{i\in \mathcal{I}_1}$ and $\mathcal{A}_2=\{(\widetilde{U}_i,\Gamma_i,\varphi_i)\}_{i\in \mathcal{I}_2}$ on $Q$ are \emph{equivalent} if their union $\{(\widetilde{U}_i,\Gamma_i,\varphi_i)\}_{i\in \mathcal{I}_1\cup\mathcal{I}_2}$ satisfies the above compatibility condition. 

\begin{defn}\label{def:orbifold}
An \emph{effective orbifold} of dimension $n$ is a paracompact Hausdorff topological space $Q$ together with an equivalence class $[\mathcal{A}]$ of $n$-dimensional effective orbifold atlases on it.
\end{defn}

The orbifold structure $(Q,\mathcal{A})$ on the topological space $Q$ defined by an orbifold atlas $\mathcal{A}$ will be denoted by the calligraphic $\mathcal{Q}$. The space $Q$ is called the underlying topological space of the orbifold $\mathcal{Q}$. An orbifold $\mathcal{Q}$ is said to be \emph{connected} if the underlying topological space $Q$ is connected.

\begin{rem}\label{rem:effective}  
The following technical result plays an important role in studying orbifolds (see \cite[Proposition A.1]{Moe:97Orb:aa}, or \cite[Proposition 1.11]{Dra:11Clo:aa} for an alternative proof). Suppose $M$ is a paracompact connected smooth manifold and $\Gamma$ is a finite subgroup of $\mbox{Diff}(M)$. Let $V\neq \varnothing$ be a connected open subset of $M$ and $f\colon V\rightarrow M$ be a diffeomorphism onto its image such that  $\varphi\circ f = \varphi|_{V}$, where $\varphi\colon M\to M/\Gamma$ is the natural projection. Then there exists a unique $\gamma\in\Gamma$ such that  $f=\gamma|_{V}$. 
\end{rem}

The assumption that in each orbifold chart $(\widetilde{U}_i,\Gamma_i,\varphi_i)$ the group $\Gamma_i$ is a finite group of diffeomorphisms of the manifold $\widetilde{U}_i$ implies that  $\Gamma_i$ acts \emph{effectively} on $\widetilde{U}_i$, i.e. no element of the group $\Gamma_i$ besides the identity element fixes all the points in $\widetilde{U}_i$. This follows from the well known fact that the only diffeomorphism of finite order on a connected paracompact smooth manifold which fixes a nonempty open set is the identity map (see for example \cite[Corollary 1.9]{Dra:11Clo:aa}). Such orbifold charts are said to be \emph{reduced} and, an orbifold is \emph{effective} if it is given by an atlas consisting of reduced orbifold charts.

Non-effective orbifold structures appear naturally in the study of orbifolds and we will encounter examples of such structures in \S\ \ref{sec:stratification}, where we consider  orbifold structures of certain subsets contained entirely in the singular locus of the initial (effective) orbifold. 

To give a \emph{non-effective orbifold structure} $\mathcal{Q}$, in the definition of the orbifold charts $(\widetilde{U}_i,\Gamma_i,\varphi_i)$ assume instead that $\Gamma_i$ is a finite group acting smoothly by diffeomorphisms on the manifolds $\widetilde{U}_i$ such that the kernel $K_i=\ker\{\Gamma_i\to {\rm Diff}(\widetilde{U}_i)\}$  of the $\Gamma_i$-action is nontrivial.  A collection of such orbifold charts form an orbifold atlas if in addition to the local compatibility condition we require each change of charts $h\colon \widetilde{U}\to\widetilde{V}$ with $\widetilde{U}\subseteq\widetilde{U}_i$ and $\widetilde{V}\subseteq\widetilde{U}_j$ to induce an isomorphism between the kernels $K_i=\ker\{\Gamma_i\to {\rm Diff}(\widetilde{U}_i)\}$ and $K_j=\ker\{\Gamma_j\to {\rm Diff}(\widetilde{U}_j)\}$ of the corresponding actions.  Note that given a non-effective orbifold $\mathcal{Q}$ one can associate to it an effective one denoted $\mathcal{Q}_{\rm eff}$, by redefining the groups in each orbifold chart $(\widetilde{U}_i,\Gamma_i,\varphi_i)$ to be $\Gamma_i/K_i$. 

Finally, we note that if all the groups $\Gamma_i$ are trivial, or if they act freely on the manifolds $\widetilde{U}_i$, then $Q$ is a topological $n$-manifold and the orbifold atlas $\mathcal{A}$ defines a smooth $n$-manifold structure on $Q$. In the general case, however, the underlying topological space $Q$ of a smooth nontrivial orbifold $\mathcal{Q}$ need not have the structure of a topological manifold. 

\begin{defn}
An orbifold $\mathcal{Q}=(Q,\mathcal{A})$ is said to be \emph{Riemannian} if in each orbifold chart $(\widetilde{U}_i,\Gamma_i,\varphi_i)\in \mathcal{A}$, the uniformizing sets $\widetilde{U}_i$ are Riemannian manifolds, the groups $\Gamma_i\subseteq {\rm Isom}(\widetilde{U}_i)$ act on $\widetilde{U}_i$ by isometries, and the change of charts of the orbifold atlas $\mathcal{A}$ are Riemannian isometries. 
\end{defn}
\begin{rem}\label{rem:riemann_orb} 
Any smooth orbifold $\mathcal{Q}=(Q,\mathcal{A})$ admits a Riemannian structure (see for instance \cite[Proposition 2.24]{Dra:11Clo:aa}).
\end{rem}

A \emph{smooth map} between two orbifolds $\mathcal{Q}=(Q,\mathcal{A})$ and $\mathcal{Q}'=(Q',\mathcal{A}')$ is a continuous map $f\colon Q\to Q'$ with the property that for any point $x\in Q$ there exist charts $(\widetilde{U},\Gamma,\varphi)$ around $x$ and $(\widetilde{U}',\Gamma',\varphi')$ around $f(x)$, such that $f$ maps $U=\varphi(\widetilde{U})$ into $U'=\varphi'(\widetilde{U}')$ and $f$ can be lifted to a smooth map $\tilde{f}\colon \widetilde{U}\to\widetilde{U}'$ satisfying $\varphi'\circ \tilde{f}=f\circ\varphi$. 

Two orbifolds $\mathcal{Q}$ and $\mathcal{Q}'$ are said to be \emph{diffeomorphic} if there are smooth orbifold maps $f\colon Q\to Q'$ and $f'\colon Q'\to Q$ such that $f\circ f'=1_{Q'}$ and $f'\circ f=1_Q$.

\subsection{Isotropy Groups}\label{ssec:isotropy_groups} 

Suppose $\mathcal{Q}$ is an effective orbifold and fix $x\in \mathcal{Q}$. Let $(\widetilde{U}_i,\Gamma_i,\varphi_i)$ be an orbifold chart at $x$.  Choose $\tilde{x}_i\in\widetilde{U}_i$ such that $\varphi_i(\tilde{x}_i)=x$, and denote by $\Gamma_{\tilde{x}_i}$ the isotropy group of $\tilde{x}_i$ in $\Gamma_i$. That is, $\Gamma_{\tilde{x}_i} = \{\gamma\in \Gamma_i\mid \gamma\tilde{x}_i=\tilde{x}_i\}$. 

The isomorphism class of the group $\Gamma_{\tilde{x}_i}$ is independent of the choice of uniformizing chart at $x$.  To see this, let $(\widetilde{U}_j,\Gamma_j,\varphi_j)$ be another orbifold chart at $x$ and let $\tilde{x}_j\in \widetilde{U}_j$ be such that $\varphi_j(\tilde{x}_j)=x$. By definition, there exists a change of charts $h\colon \widetilde{U}\to\widetilde{V}$ defined on an open connected neighbourhood $\widetilde{U}$ of $\tilde{x_i}$ onto the neighbourhood $\widetilde{V}=h(\widetilde{U})$ of $\tilde{x}_j$ such that $h(\tilde{x}_i)=\tilde{x}_j$. Without loss of generality we can assume that the set $\widetilde{U}$ is $\Gamma_{\tilde{x}_i}$-invariant. Since for each $\gamma\in\Gamma_{\tilde{x}_i}$, the composition $h\circ\gamma\circ h^{-1}\colon \tilde{V}\to \widetilde{V}$ is a diffeomorphism of $\widetilde{V}$ satisfying $\varphi_j\circ (h\circ \gamma\circ h^{-1}) = \varphi_j|_{\widetilde{V}}$, the result in Remark \ref{rem:effective} implies that there exists a unique element $\delta\in \Gamma_j$ such that $h\circ\gamma\circ h^{-1}=\delta|_{\widetilde{V}}$. Thus the change of charts $h$ induces an injective homomorphism $\theta\colon \Gamma_{\tilde{x}_i}\to \Gamma_j$ given by $\theta(\gamma)=\delta$. Further, we claim that the homomorphism $\theta$ gives an isomorphism between $\Gamma_{\tilde{x}_i}$ and $\Gamma_{\tilde{x}_j}$. 

We first show that if $\lambda\in \Gamma_j$ is such that $\lambda.\widetilde{V}\cap \widetilde{V}\neq\varnothing$, then $\lambda\in {\rm Im}(\theta)$. Indeed, let $\lambda\in\Gamma_j$ and let $\widetilde{V}'$ be an open connected set contained in $\lambda.\widetilde{V}\cap \widetilde{V}\neq\varnothing$. Let $\widetilde{U}' = h^{-1}|_{\widetilde{V}'}(\widetilde{V}')$ and denote $h'=h|_{\widetilde{U}'}$. Then $\widetilde{U}'$ is an open connected subset of $\widetilde{U}$ and the composition $h'^{-1}\circ\lambda\circ h' \colon \widetilde{U}'\to \widetilde{U}'$ is a diffeomorphism which satisfies $\varphi_i|_{\widetilde{U}}\circ(h'^{-1}\circ\lambda\circ h') = \varphi_i|_{\widetilde{U}'}$. By Remark \ref{rem:effective}, there is a unique element $\gamma\in \Gamma_{\tilde{x}_i}$ such that $h'^{-1}\circ\lambda\circ h' = \gamma|_{\widetilde{U}'}$. Thus $\lambda = h\circ \gamma\circ h^{-1}$ on $\widetilde{V}'$ and therefore on $\widetilde{V}$. This shows that $\lambda = \theta(\gamma)$, i.e. $\lambda\in {\rm Im}(\theta)$. Finally, by shrinking the domain $\widetilde{U}$ of $h$ if necessary we can assume that the set $\widetilde{V} = h(\widetilde{U})$ is such that $\{\lambda\in\Gamma_j\mid \lambda.\widetilde{V}\cap \widetilde{V}\neq\varnothing\}=\Gamma_{\tilde{x}_j}$. This implies that ${\rm Im}(\theta)=\Gamma_{\tilde{x}_j}$, and thus  the monomorphism $\theta\colon \Gamma_{\tilde{x}_i}\to\Gamma_j$ maps $\Gamma_{\tilde{x}_i}$ isomorphically into $\Gamma_{\tilde{x}_j}$. 

The above argument, with $i=j$, can be used to show that the isomorphism class of the group $\Gamma_{\tilde{x}_i}$ is also independent of the choice of the lift $\tilde{x}_i$ of $x$ in $\widetilde{U}_i$. If $\tilde{x}_i'$ is another lift of $x$ in $\widetilde{U}_i$, then $\tilde{x}_i' = \gamma\tilde{x}_i$ for some $\gamma\in\Gamma_i$ and $\Gamma_{\tilde{x}_i'}=\gamma\Gamma_{\tilde{x}_i}\gamma^{-1}.$ 

In conclusion, given a point $x\in \mathcal{Q}$, the isomorphism class of the isotropy group $\Gamma_{\tilde{x}_i}$ is independent of both the choice of orbifold chart $(\widetilde{U}_i,\Gamma_i,\varphi_i)$ at $x$ and its lift $\varphi_i^{-1}(x)$ within the chosen orbifold chart. This isomorphism class is denoted by $\Gamma_x $ and is referred to as the \emph{isotropy group of $x$}. 

A point $x\in Q$ is said to be a \emph{singular point} if its isotropy group $\Gamma_x$ is nontrivial. A nonsingular point is also called a \emph{regular point}. We denote by $\Sigma$ the collection of all singular points in $\mathcal{Q}$, and by $Q_{\rm reg}$ the set of the regular ones. The singular locus $\Sigma$ is closed and has empty interior in $Q$ (cf. \cite[Proposition 1.10]{Dra:11Clo:aa}). If $\Sigma=\varnothing$ then the orbifold $\mathcal{Q}$ is in fact a smooth manifold. 

\begin{rem}\label{rem:fundam_nbhd} Each point $x\in \mathcal{Q}$ has an open neighbourhood $U_x$ (called \emph{fundamental neighbourhood at $x$}) such that the group of the associated orbifold chart is isomorphic to the isotropy group $\Gamma_x$ of $x$. We call such  chart a \emph{fundamental chart at $x$} and denote it by $(\widetilde{U}_x,\Gamma_x,\varphi_x)$.

From the compatibility condition of charts it follows that if $U_x$ is a fundamental neighbourhood at $x$, then the isotropy group of any point $y\in U_x$ is isomorphic to a subgroup of the isotropy group $\Gamma_x$ of $x.$ Moreover, a fundamental chart $(\widetilde{U}_y,\Gamma_y,\varphi_y)$ at $y$ can be chosen such that $\widetilde{U}_y\subset \widetilde{U}_x$, $\Gamma_y\le\Gamma_x$ and $\varphi_y=\varphi_x|_{\widetilde{U}_y}$. In particular, any point contained in a fundamental neighbourhood of a regular point is again regular.

An orbifold atlas $\mathcal{A} = \{(\widetilde{U}_i,\Gamma_i, \varphi_i)\}_{i\in\mathcal{I}}$ defining the orbifold structure $\mathcal{Q}$ can be refined to an equivalent orbifold atlas containing only fundamental charts.
\end{rem}

\subsection{Developable Orbifolds}\label{ssec:good_orbifolds} 

Let $M$ be a smooth $n$-manifold and let $\Gamma\subseteq{\rm {Diff}}(M)$ be a discrete group of diffeomorphisms. The action of $\Gamma$ on $M$ is said to be \emph{proper} (or $\Gamma$ acts \emph{properly} on $M$) if given any two compact sets $C$ and $C'$ in $M$, the set $\{\gamma\in \Gamma\mid \gamma.C\cap C'\neq\varnothing\}$ is finite. Equivalently (cf. \cite[Remark I.8.3(1) and I.8.4(2)]{Bri:99Met:aa} or \cite[Proposition 1.1]{Dra:11Clo:aa}), the action of $\Gamma$ on $M$ is proper if  and only if:
\begin{itemize}
\item[$(i)$] the space of orbits $M/\Gamma$ with the quotient topology is Hausdorff;
\item[$(ii)$] each $\tilde{x}\in M$ has a finite isotropy group $\Gamma_{\tilde{x}}$;
\item[$(iii)$] each $\tilde{x}\in M$ has a $\Gamma_{\tilde{x}}$-invariant open neighbourhood $\widetilde{U}_{\tilde{x}}$ such that $\{\gamma\in \Gamma\mid \gamma.\widetilde{U}_{\tilde{x}}\cap\widetilde{U}_{\tilde{x}}\neq\varnothing\}=\Gamma_{\tilde{x}}$. 
\end{itemize}

Denote by $Q=M/\Gamma$ the orbit space of the $\Gamma$-action on $M$ and let $\pi\colon M\to Q$ denote the quotient map. The sets $\pi(\widetilde{U}_{\tilde{x}})=\widetilde{U}_{\tilde{x}}/\Gamma_{\tilde{x}}$, with $\widetilde{U}_{\tilde{x}}$ given by $(iii)$ above, define an open cover of the space $Q$ and the collection $\{(\widetilde{U}_{\tilde{x}},\Gamma_{\tilde{x}},\pi|_{\widetilde{U}_{\tilde{x}}})\}_{\tilde{x}\in M}$ forms an $n$-orbifold atlas on $Q$. The change of charts of this orbifold atlas are restrictions of elements of $\Gamma$: if $h:\widetilde{U}\to \widetilde{V}$ is a change of charts, then there exists a unique $\gamma\in\Gamma$ such that $h=\gamma|_{\widetilde{U}}$. The orbifold structure given by this atlas depends only on the $\Gamma$ action on $M$ and not on the choice of the invariant open neighbourhoods  $\widetilde{U}_{\tilde{x}}$ {\cite[Proposition 13.2.1]{Thu:78The:aa}}. The orbifold $\mathcal{Q}=M/\Gamma$ is called \emph{the orbifold quotient of $M$ by the proper action of $\Gamma$}. 

An orbifold is called \emph{developable} (or \emph{good}) if it arises as the global quotient of a discrete group acting properly on a manifold. 

\subsection{Orbifold fundamental group}

A smooth map between two orbifolds $\mathcal{Q}'=(Q',\mathcal{A}')$ and $\mathcal{Q}=(Q,\mathcal{A})$ is called an \emph{orbifold covering map} if its underlying continuous map $p\colon  Q' \to Q$ is surjective and satisfies the following condition: for each point $x \in Q,$ there exists an open neighbourhood $U\subseteq Q$ of $x$ with orbifold chart $(\widetilde{U},\Gamma,\varphi)$ and a subgroup $\Gamma'\le\Gamma$ such that each connected component $U'$ of  $p^{-1}(U)$ in $Q'$ is uniformized by $(\widetilde{U},\Gamma',\varphi')$. In this case we say that  $\mathcal{Q}'$ is an \emph{orbifold covering} of the orbifold $\mathcal{Q}$ and, when there is no confusion for the orbifold structures involved, we say that $p\colon Q'\to Q$ is the corresponding orbifold covering map. However, it is important to note that, in general, the map $p\colon Q'\to Q$  is not a covering map of the underlying topological spaces. 

An orbifold covering map $p\colon \widetilde{Q}\to Q$ of a connected orbifold $\mathcal{Q}$ is called a \emph{universal covering map} if it satisfies the property that for any other orbifold covering map $p'\colon Q'\to Q$ there exists an orbifold covering $\overline{p}\colon \widetilde{Q}\to Q'$ such that $p=p'\circ\overline{p}$. If $p\colon \widetilde{Q}\to Q$ is a universal covering  map then the orbifold $\widetilde{\mathcal{Q}}$ is called the \emph{orbifold universal covering} of $\mathcal{Q}$. The orbifold $\widetilde{\mathcal{Q}}$ is well defined up to diffeomorphism of orbifolds.

Thurston showed that each orbifold $\mathcal{Q}$ has an orbifold universal covering \cite[Proposition 13.2.4]{Thu:78The:aa} and defined the \emph{orbifold fundamental group} $\pi_1^{orb}(\mathcal{Q})$ of $\mathcal{Q}$ to be the group of deck transformations of its orbifold universal covering. The isomorphism class of this group is an invariant of the orbifold structure $\mathcal{Q}$ and not just of the topology of the underlying space $Q$. In general, the orbifold fundamental group is not the fundamental group of the underlying topological space; and orbifolds with the same underlying topological space but different orbifold structures can have non-isomorphic orbifold fundamental groups. 

For a developable orbifold $\mathcal{Q}=M/\Gamma$, the quotient $M \to M/\Gamma$ can be regarded as an orbifold covering with $\Gamma$ as the group of deck transformations. Any subgroup $\Gamma'$  of $\Gamma$ induces an intermediate orbifold covering $M/\Gamma' \to M/\Gamma$, and any manifold covering $\widetilde{M} \to M$ gives an orbifold covering for $\mathcal{Q}$ by composing with the quotient map $M \to M/\Gamma$. In particular, the universal covering space of $M$ is the orbifold universal covering space of $\mathcal{Q}$, and the orbifold fundamental group belongs in a short exact sequence $$1 \to \pi_1 (M) \to \pi_1^{orb}(\mathcal{Q}) \to \Gamma \to 1.$$ 
Conversely, if the orbifold universal covering $\widetilde{\mathcal{Q}}$ of an orbifold $\mathcal{Q}$ is a manifold, then $\mathcal{Q}$ is the global quotient of $\widetilde{Q}$ by the proper action of $\pi_1^{orb}(\mathcal{Q})$. Therefore, an orbifold $\mathcal{Q}$ is developable if and only if its universal covering $\widetilde{\mathcal{Q}}$ is a manifold.

If $M$ is a Riemannian manifold and $\Gamma\subseteq{\rm Isom}(M)$ is a discrete group acting properly and by isometries on $M$, then the orbifold quotient $\mathcal{Q}=M/\Gamma$ carries a natural Riemannian orbifold structure induced by the one on $M$. Conversely, a Riemannian structure on a connected developable orbifold $\mathcal{Q}$, gives a natural Riemannian metric on the universal covering $M=\widetilde{\mathcal{Q}}$, the pull back by the covering map of the metric on $\mathcal{Q}$, and $\Gamma=\pi_1^{orb}(\mathcal{Q})$ acts as a group of isometries on $M$.  Moreover, in the light of Remark \ref{rem:riemann_orb}, any connected  developable orbifold $\mathcal{Q}$ can be obtained as the global quotient $M/\Gamma$ of a simply connected Riemannian manifold $M$ by the proper action of a discrete group of isometries $\Gamma\subseteq{\rm Isom}(M)$. The isomorphism class of the group $\Gamma$ is independent of the choice of Riemannian metric on $\mathcal{Q}$ and depends only on the orbifold structure $\mathcal{Q}$.

\subsection{Orbifold Geodesics}\label{ssec:paths}
 
A \emph{geodesic path} on an effective Riemannian orbifold $\mathcal{Q}=(Q,\mathcal{A})$ is a continuous path $c\colon I\to Q$ defined on an interval $I\subseteq\mathbb{R}$ and having the property that for any $t\in I$ there exists a subinterval $J\subset I$ containing $t$ and an orbifold chart $(\widetilde{U},\Gamma,\varphi)$ around $c(t)$ such that $c(J)\subset\varphi(\widetilde{U})$ and the restriction $c|_J$ lifts to a smooth geodesic $\tilde{c}\colon J\to\widetilde{U}$ satisfying $\varphi\circ\tilde{c}=c|_J$. 

The local lift of $c$ in an orbifold chart at $c(t)$ is not necessarily unique and giving an orbifold geodesic requires specifying a choice of such lift at each $t\in I$. When $I$ is a closed interval $[a,b]$, the image $c([a,b])\subset Q$ is compact and there exists a finite subdivision $a=t_0\le t_1\le \cdots\le t_k=b$ of the interval $[a,b]$ which is fine enough so that each restriction $c|_{[t_{i-1},t_i]}$ lifts to a smooth geodesic in some orbifold chart $(\widetilde{U}_i,\Gamma_i,\varphi_i)$ around $c([t_{i-1},t_i])$ for $i=1,\ldots,k$.  Over such a subdivision, a geodesic path over $c:[a,b]\to Q$ is given by a sequence $(\tilde{c}_1, h_1,\tilde{c}_2,\ldots, h_{k-1},\tilde{c}_k,h_k)$, where
\begin{itemize}
\item[$\cdot$] $\tilde{c}_i:[t_{i-1},t_i]\to\widetilde{U}_i$ are geodesic paths satisfying $\varphi_i\circ\tilde{c}_i =c|_{[t_{i-1},t_i]}$,
\item[$\cdot$] $h_i\colon U_i\to V_i$ are changes of charts at $\tilde{c}_{i}(t_{i})$ such that $h_i\colon \tilde{c}_{i}(t_i)\mapsto\tilde{c}_{i+1}(t_i)$ and $Dh_i\colon  \dot{\tilde{c}}_{i}(t_i)\mapsto\dot{\tilde{c}}_{i+1}(t_i)$ for $i=1,\ldots,k-1$, and $h_k$ is a change of charts at $\tilde{c}_k(b)$.
\end{itemize}

\begin{figure}[h]
\begin{center}
\leavevmode\hbox{}
\includegraphics[width=10cm]{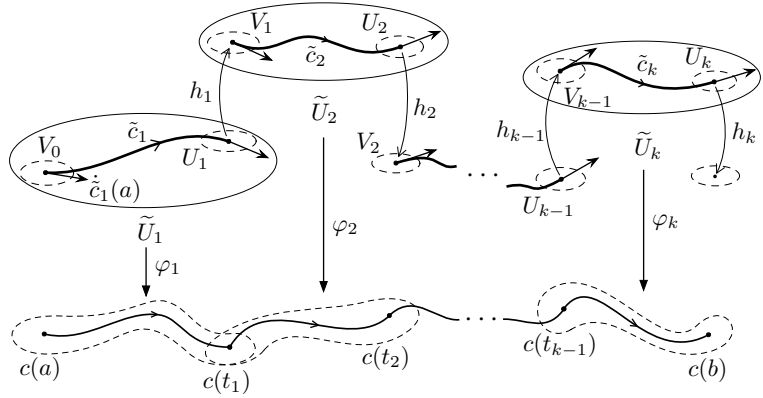}
\caption[Orbifold path]{Geodesic path over a subdivision $a=t_0\le t_1\le \cdots\le t_k=b$.}
\label{fig:orb_path}
\end{center}
\end{figure}

Among geodesic paths with underlying continuous map $c\colon [a,b]\to Q$ we define an equivalence relation generated by the following two operations \cite[III.$\mathcal{G}$.3.2]{Bri:99Met:aa}:
\begin{itemize}
\item[$(i)$] add a new point $t_i'\in[t_{i-1},t_i]$ to the subdivision $a=t_0\le t_1\le\dots\le t_k=b$, and replace the geodesic $\tilde{c}_i\colon [t_{i-1},t_i]\to \widetilde{U}_i$ by $\tilde{c}_i',1,\tilde{c}_i''$, where $\tilde{c}_i'=\tilde{c}_i|_{[t_{i-1},t_i']}$ and $\tilde{c}_i''=\tilde{c}_i|_{[t_i',t_i]}$, and $1\in\Gamma_{\tilde{c}_i(t')}$ is the identity element;
\item[$(ii)$] replace $(\tilde{c}_1, h_1,\tilde{c}_2,\ldots, \tilde{c}_k, h_k)$ by a new sequence $(\tilde{c}'_1, h'_1,\tilde{c}_2',\ldots,\tilde{c}'_k, h'_k)$ defined over the same subdivision and satisfying the following property: for each $i=1,\ldots,k$ there exist a change of charts $g_i$ defined from a neighbourhood of $\tilde{c}_i([t_{i-1},t_i])$ onto a neighbourhood of $\tilde{c}'_i([t_{i-1},t_i])$ such that $g_i\circ\tilde{c}_i=\tilde{c}'_i,$ $g_1^{-1}\circ\tilde{c}_1'$ and $\tilde{c}_1$ have the same germ at $a$, and for each $i=1,\ldots,k$ the maps $h_i'\circ g_{i}$ and $g_{i+1}\circ h_i$ have the same germ at $\tilde{c}_i(t_i)$.
\end{itemize}

Two geodesic paths over $c\colon [a,b]\to Q$ are said to be \emph{equivalent} if once redefined over a suitable common subdivision using operations of type $(i)$, one can pass from one to the other by an operation of type $(ii)$. 

\begin{defn}\label{def:geodesic}
An \emph{orbifold geodesic segment} on a Riemannian orbifold $\mathcal{Q}$ is a continuous path $c\colon [a,b]\to Q$ together with an equivalence class $[c]$ of geodesic paths over $c$. 
\end{defn}

For short we will denote an orbifold geodesic by $[c]\colon [a,b]\to \mathcal{Q}$. The points $x=c(a)$  and $y=c(b)$ are the initial and terminal points of $[c]$ in $Q$, and we say that the orbifold geodesic $[c]$ joins $x$ to $y$. 

The length of a geodesic path $(\tilde{c}_1, h_1,\tilde{c}_2,\ldots, \tilde{c}_k,h_k)$ over $c\colon [a,b]\to Q$ is defined to be the sum of the lengths of the geodesics $\tilde{c}_i$ for $i=1,\ldots,k$. Because the maps $g_i$ in an operation of type $(ii)$ are isometries, equivalent geodesic paths have the same length and, we define the length of an orbifold geodesic $[c]\colon [a,b]\to \mathcal{Q}$ to be length of an orbifold path $(\tilde{c}_1, h_1,\tilde{c}_2,\ldots, \tilde{c}_k,h_k)$ representing it. If the underlying map $c\colon [a,b]\to Q$ is the constant map at a point $x\in Q$, then any orbifold geodesic $[c]$ over $c$ has zero length and is represented by a pair $(x,\gamma)$, with $\gamma\in\Gamma_x$. 

An orbifold geodesic $[c]\colon [a,b]\to\mathcal{Q}$ is \emph{closed} if $c(a)=c(b)$ and the  change of charts $h_k\colon U_k\to V_0$ maps onto an open neighbourhood $V_0$ of $\tilde{c}_1(a)$ such that $$h_k\colon \tilde{c}_k(b)\mapsto\tilde{c}_1(a)\mbox{ and } Dh_k\colon \dot{\tilde{c}}_k(b)\mapsto\dot{\tilde{c}}_1(a).$$
\begin{figure}[h]
\begin{center}
\leavevmode\hbox{}
\includegraphics[width=7cm]{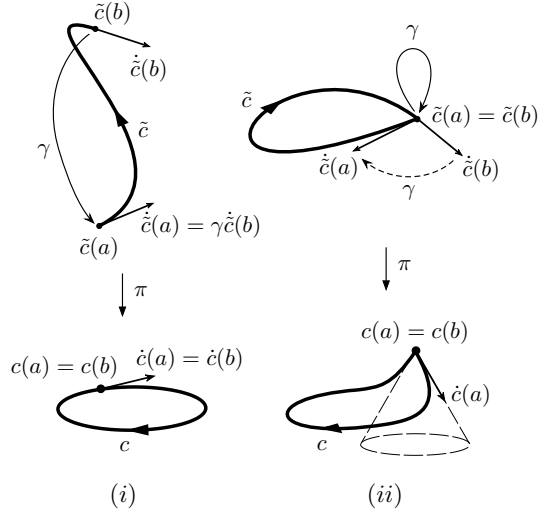}
\caption{Closed geodesics on $\mathcal{Q}=M/\Gamma$.}
\label{fig:geodesics}
\end{center}
\end{figure}

On developable orbifolds, geodesics have a much simpler form. Let $\mathcal{Q}=M/\Gamma$ be the orbifold quotient of a Riemannian manifold $M$ by the proper action of the discrete group of isometries $\Gamma\subseteq{\rm Isom}(M)$, and let $\pi\colon M\to Q$ be the quotient map. An orbifold geodesic $[c]:[a,b]\to Q$ over the subdivision $a=t_0\le t_1\le\dots\le t_k=b$ is given by a sequence $(\tilde{c}_1, \gamma_1,\tilde{c}_2,\ldots, \tilde{c}_k,\gamma_k)$, where $\tilde{c}_i$ are geodesics on the manifold $M$ and $\gamma_i\in\Gamma$ are isometries of $M$ (cf. \S\ \ref{ssec:good_orbifolds}). Define $\tilde{c}\colon [a,b]\to M$ to be the path in $M$ given by 
\[
\tilde{c}(t)=
\left\{
\begin{array}{ll}
\tilde{c}_1(t) & \mbox{ if } t\in[t_0,t_1] \\
\gamma_1^{-1}\tilde{c}_2(t) & \mbox{ if } t\in[t_1,t_2]\\
\gamma_1^{-1}\ldots\gamma_{i-1}^{-1}\tilde{c}_i(t) & \mbox{ if } t\in[t_{i-1},t_i] \mbox{ for } i=3,\ldots,k.
\end{array}
\right.
\] 
Since each $\gamma_i$ is an isometry, the restrictions $\tilde{c}|_{[t_{i-1},t_i]}$ are geodesics in $M$, and because each $\gamma_i^{-1}$ maps ${\tilde{c}}_{i+1}(t_i)$ to ${\tilde{c}}_i(t_i)$ and $\dot{\tilde{c}}_{i+1}(t_i)$ to $\dot{\tilde{c}}_i(t_i)$ for $i=1,\ldots,k-1$, the path $\tilde{c}\colon [a,b]\to M$ is a smooth geodesic in $M$. By letting $\gamma=\gamma_{k}\ldots\gamma_2\gamma_1$, we see that the geodesic path $(\tilde{c},\gamma)$  is equivalent to $(\tilde{c}_1, \gamma_1,\tilde{c}_2,\ldots, \tilde{c}_k,\gamma_k)$, and therefore the orbifold geodesic $[c]\colon [a,b]\to\mathcal{Q}$ can be represented by the pair $(\tilde{c},\gamma)$.

In conclusion, on a developable Riemannian orbifold $\mathcal{Q}=M/\Gamma$, geodesics are in one-to-one correspondence with equivalence classes of pairs $(\tilde{c},\gamma)$, where $\tilde{c}$ is a geodesic in $M$ and $\gamma\in \Gamma$.  Two pairs $(\tilde{c},\gamma)$ and $(\tilde{c}',\gamma')$ are equivalent if and only if there exists an element $\delta\in \Gamma$ such that $\tilde{c'}=\delta.\tilde{c}$ and $\gamma'=\delta^{-1}\gamma\delta$. An orbifold geodesic $(\tilde{c},\gamma)$ on $\mathcal{Q}$ is closed if 
$\gamma\tilde{c}(b)=\tilde{c}(a) \mbox{ and } \gamma\dot{\tilde{c}}(b)=\dot{\tilde{c}}(a)$ 
(see Figure \ref{fig:geodesics}).

\subsection{Orbifold Tangent Bundle}\label{ssec:tangent_bundle}
Let $\mathcal{Q}=(Q,\mathcal{A})$ be an effective $n$-orbifold with atlas $\mathcal{A} = \{(\widetilde{U}_i,\Gamma_i,\varphi_i)\}_{i\in\mathcal{I}}$. For each open set $U_i\subseteq Q$ with uniformizing chart $(\widetilde{U}_i,\Gamma_i,\varphi_i),$ the action of the finite group $\Gamma_i$ on $\widetilde{U}_i$ extends to a smooth action on the tangent bundle $T\widetilde{U}_i$ defined by  $$\gamma.(\tilde{x},\tilde\v)=\big(\gamma.\tilde{x},D_{\tilde{x}}\gamma(\tilde\v)\big),$$  where $\gamma\in\Gamma_i$,  $(\tilde{x},\tilde\v)\in T\widetilde{U}_i$ and $D_{\tilde{x}}\gamma\colon T_{\tilde{x}}\widetilde{U}_i\to T_{\gamma\tilde{x}}\widetilde{U}_i$ is the differential of $\gamma$ at $\tilde{x}.$ The orbit space $TU_i=T\widetilde{U}_i/\Gamma_i$ of the $\Gamma_i$-action on $T\widetilde{U}_i$ has a natural structure of a $2n$-dimensional orbifold uniformized by $(T\widetilde{U}_i,\Gamma_i,\pi_i)$, where $\pi_i\colon T\widetilde{U}_i\to TU_i$ is the quotient map.

We use the change of charts of the orbifold atlas $\mathcal{A}$ to glue together the local orbifolds $T\widetilde{U}_i/\Gamma_i$ and obtain the orbifold tangent bundle of $\mathcal{Q}$. Suppose $(\widetilde{U}_i,\Gamma_i,\varphi_i)$ and $(\widetilde{U}_j,\Gamma_j,\varphi_j)$ are two orbifold charts in $\mathcal{A}$ and let $\tilde{x}_i\in \widetilde{U}_i$ and $\tilde{x}_j\in \widetilde{U}_j$ be such that $\varphi_i(\tilde{x}_i)=\varphi_j(\tilde{x}_j)$. Then there exists a change of charts $h\colon \widetilde{U}\to \widetilde{V}$ defined on an open neighbourhood $\widetilde{U}$ of $\tilde{x}_i$ onto a neighbourhood $\widetilde{V}$ of $\tilde{x}_j$ such that $\varphi_j\circ h=\varphi_i|_{\widetilde{U}}$. The change of charts $h$ gives rise to a diffeomorphism $Dh\colon T\widetilde{U}\to T\widetilde{V}$ defined by $$Dh \colon (\tilde{x},\tilde\v) \mapsto (h(\tilde{x}),D_{\tilde{x}}h(\tilde\v)),$$ where $(\tilde{x},\tilde\v)\in T\widetilde{U}$ and $D_{\tilde{x}}h\colon T_{\tilde{x}}\widetilde{U}\to T_{h(\tilde{x})}\widetilde{V}$ is the differential of $h$ at $\tilde{x}$. Because $h$ is equivariant with respect to the local group actions on $\widetilde{U}$ and $\widetilde{V}$ (since $\varphi_j\circ h=\varphi_i|_{\widetilde{U}}$), the map $Dh\colon T\widetilde{U}\to T\widetilde{V}$ is an equivariant diffeomorphism with respect to the induced local group actions on $T\widetilde{U}$ and $T\widetilde{V}$. Therefore $\pi_j\circ Dh=\pi_i|_{T\widetilde{U}}$, and this allows us to identify $\pi_i(\tilde{x},\tilde\v)\sim\pi_j(\tilde{y},\tilde\w)$ in $TU$ whenever $Dh(\tilde{x},\tilde\v)=(\tilde{y},\tilde\w)$. This identification induces an equivalence relation $\sim$ on the disjoint union of the spaces $TU_i=T\widetilde{U}_i/\Gamma_i$ for $i\in \mathcal{I},$ and we define $TQ$ to be the quotient space $$TQ= \bigsqcup\limits_{i\in\mathcal{I}}TU_i\big{/}\sim$$ with the quotient topology. 
  
The collection of orbifold charts $\{(T\widetilde{U}_i,\Gamma_i,\pi_i)\}_{i\in\mathcal{I}}$ together with the change of charts $Dh$ defines on the space $TQ$ a $2n$-dimensional orbifold atlas, which we denote by $T\mathcal{A}$. We note that equivalent orbifold atlases on $Q$ define equivalent atlases on $TQ$ and give the following definition (see \cite[Proposition 1.21]{Ade:07Orb:aa}).
 
\begin{defn} 
 The \emph{orbifold tangent bundle} of the $n$-orbifold $\mathcal{Q}=(Q,\mathcal{A})$ is defined to be the $2n$-dimensional orbifold $T\mathcal{Q}=(TQ,T\mathcal{A})$.  
\end{defn} 

The natural projection onto the first factor $p\colon TQ\to Q$ defines a smooth orbifold map from the orbifold tangent bundle $T\mathcal{Q}$ onto $\mathcal{Q}$, and, unlike the tangent bundle of a manifold, the fibers of this map need not have the structure of a vector space, but rather that of a quotient of a vector space by the linear action of a finite group. To see this, choose $x\in Q$ and let $(\widetilde{U},\Gamma,\varphi)$ be an orbifold chart over an open set $U\subseteq Q$ containing $x$, and $(T\widetilde{U},\Gamma,\pi)$ be the corresponding local chart over $TU$ in the orbifold tangent bundle $T\mathcal{Q}$. The bundle projection $T\widetilde{U}\to \widetilde{U}$ is $\Gamma$-invariant and  induces a natural projection $p\colon TU\to U$ given by $p(\pi(\tilde{x},\tilde{\v})) = \varphi(\tilde{x})$, where $(\tilde{x},\tilde{\v})\in T\widetilde{U}$. Hence, the fiber $p^{-1}(x)$ is the image via the quotient map $\pi$ of the orbit $\Gamma.(\tilde{x},\tilde{v})\subset T\widetilde{U}$ with $\tilde{x}\in\varphi^{-1}(x)$ and $\tilde{\v}\in T_{\tilde{x}}\widetilde{U}$. Note that $\varphi^{-1}(x)\in\widetilde{U}$ is in bijective correspondence with the left cosets $\Gamma/\Gamma_x$ and that at each $\tilde{x}\in \varphi^{-1}(x)$ the isotropy group $\Gamma_{\tilde{x}}$ of $\tilde{x}$ acts linearly on the tangent space $T_{\tilde{x}}\widetilde{U}$. Moreover, if $\tilde{x}'$ is another point in $\varphi^{-1}(x)$, then $\Gamma_{\tilde{x}'}\simeq \Gamma_{\tilde{x}}$ and the quotient spaces $T_{\tilde{x}'}\widetilde{U}/\Gamma_{\tilde{x}'}$ and $T_{\tilde{x}}\widetilde{U}/\Gamma_{\tilde{x}}$ are naturally homeomorphic. Therefore $p^{-1}(x)=\{(x,\v)\mid \v\in T_{\tilde{x}}\widetilde{U}/\Gamma_{\tilde{x}}\} = \{x\}\times T_{\tilde{x}}\widetilde{U}/\Gamma_{\tilde{x}}$ for some $\tilde{x}\in\varphi^{-1}(x)$. 

In conclusion, the fiber $p^{-1}(x)\subset TU$ is homeomorphic to the quotient space $\R^n/\Gamma_x$, where the isotropy group $\Gamma_x$ is assumed to act on $\R^n$ via a faithful linear representation $\Gamma_x\subset GL_n(\R)$. 
 
The fiber $p^{-1}(x)\subset TQ$, denoted by $T_xQ$, is called the \emph{tangent cone to $Q$ at $x$}.  Note that $T_xQ\simeq \R^n/\Gamma_x$ has a natural vector space structure if and only if $x$ is a regular point of $\mathcal{Q}$.

\section{Stratification of the Singular Locus}\label{sec:stratification}

Given a smooth $n$-orbifold $\mathcal{Q}=(Q,\mathcal{A})$, the orbifold structure $\mathcal{Q}$ induces a canonical stratification of underlying topological space $Q$ where each stratum corresponds to a connected set of points in $Q$ having the same isotropy type \cite{Kaw:78The:aa}. This canonical stratification gives a decomposition of the space $Q$ into a disjoint union of smooth manifolds having dimensions less than or equal to $n$. Each point $x\in Q$ belongs to only one such smooth manifold, and if this manifold has dimension $k$, we say that the point $x$ has singular dimension $k$. We can rearrange the canonical stratification of $Q$ by orbit type into a stratification by singular dimension where each stratum $\Sigma_k$ is a union of smooth manifolds of dimension $k$ for $0\le k\le n$.  

In this section we give a description of this stratification and show that in a Riemannian orbifold, the singular strata $\Sigma_k$ are totally geodesic. The stratification by singular dimension of an orbifold is discussed in C. Seaton's thesis (see \cite{Sea:08Two:aa} for the paper version) and we borrow from the terminology introduced therein.

Suppose $\mathcal{Q}=(Q,\mathcal{A})$ is a smooth $n$-dimensional orbifold. Let $x\in Q$ and let $(\widetilde{U},\Gamma,\varphi)$ be an orbifold chart at $x$. Choose $\tilde{x}\in\widetilde{U}$ such that $\varphi(\tilde{x})=x$.  As noticed in \S\ \ref{ssec:tangent_bundle} above, the isotropy group $\Gamma_{\tilde{x}}$ acts linearly on the tangent space $T_{\tilde{x}}\widetilde{U}$.  Let $(T_{\tilde{x}}\widetilde{U})^{\Gamma_{\tilde{x}}}$ be the vector subspace of $T_{\tilde{x}}\widetilde{U}$ obtained as the fixed-point set of this $\Gamma_{\tilde{x}}$-action. If $h\colon \widetilde{U}\to\widetilde{V}$ is a change of charts at $x$, then $h$ is $\Gamma_x$-equivariant and the differential $Dh_{\tilde{x}}\colon T_{\tilde{x}}\widetilde{U}\to T_{h(\tilde{x})}\widetilde{V}$ induces an isomorphism between the subspaces fixed by  
the corresponding $\Gamma_x$-actions on $T_{\tilde{x}}\widetilde{U}$ and $T_{h(\tilde{x})}\widetilde{V}$. This shows that up to isomorphism, the vector subspace $(T_{\tilde{x}}\widetilde{U})^{\Gamma_{\tilde{x}}}$ is independent of both the choice of the orbifold chart  $\widetilde{U}$ at $x$ and the lift of $x$ in $\widetilde{U}$. 

 The isomorphism class of the vector subspace $(T_{\tilde{x}}\widetilde{U})^{\Gamma_{\tilde{x}}}$ is denoted $T_xQ^{\Gamma_x}$ and is called the {\it space of tangent vectors} at $x$.  The dimension of this vector space depends only on the local action of the isotropy group $\Gamma_x$ at $x$, and we give the following definition.

\begin{defn}
The \emph{singular dimension} of a point $x$ of an effective orbifold $\mathcal{Q}$ is defined to be the dimension of the space of tangent vectors $T_xQ^{\Gamma_x}$ at $x$.
\end{defn}

For each $k=0,\ldots, n$, we let $\Sigma_k$ denote the set of points in $Q$ with singular dimension $k$. Since each point $x\in Q$ belongs to only one stratum $\Sigma_k$ for some $k$, the underlying space $Q$ can be written as the disjoint union $Q= \bigsqcup\limits_{k=0}^{n}\Sigma_k$. We refer to this decomposition as the \emph{stratification by singular dimension} of the underlying topological space $Q$. It depends only on the orbifold structure $\mathcal{Q}$ and not the particular choice of an atlas defining $\mathcal{Q}$.

\begin{rem}\label{rem:regular} 
Note that $Q_{\rm reg}=\Sigma_n$ and the singular locus $\Sigma=\bigsqcup\limits_{k=0}^{n-1}\Sigma_k.$
\end{rem}

Proposition \ref{prop:strata} below shows that for each $k=0,\ldots,n$, the stratum $\Sigma_k$ can be given the structure of a $k$-dimensional manifold; and when $\mathcal{Q}$ is a Riemannian orbifold, each connected component of $\Sigma_k$ is totally geodesic in $\mathcal{Q}$. Recall that a submanifold $N\subset M$ of a Riemannian manifold $M$ is \emph{totally geodesic} if any geodesic in $N$ with the induced Riemannian metric from $M$ is also a geodesic in $M$. Alternatively, $N$ is totally geodesic submanifold of $M$ if for each $x\in N$ there exists a neighbourhood of the origin in $T_xN$ which is mapped onto $N$ via the exponential map $\exp_x\colon T_xM\to M$. 
  
 We first state an elementary result concerning the structure of the fixed point set of a family of isometries.

\begin{lemma}[{\cite[Proposition 24]{Pet:06Rie:aa}}]\label{lemma:fixed_point_set}
Suppose $M$ is a complete Riemannian manifold and $\Lambda\subseteq {\rm Isom}(M)$ is a set of isometries of $M$. Let $F$ be the set of points of $M$ which are fixed by all the elements of $\Lambda$. Then each connected component of $F$ is a totally geodesic submanifold of $M$.
\end{lemma}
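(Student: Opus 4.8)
The plan is to work locally around a fixed point and identify $F$ near such a point with the image under the exponential map of the common fixed subspace of the isometry differentials. Fix $p\in F$. For each $\phi\in\Lambda$ we have $\phi(p)=p$, so the differential $d\phi_p\colon T_pM\to T_pM$ is a linear isometry, and we set
$$V=\bigcap_{\phi\in\Lambda}\ker\bigl(d\phi_p-\mathrm{Id}_{T_pM}\bigr)\subseteq T_pM,$$
the subspace of vectors fixed by every $d\phi_p$. Since $M$ is complete, choose $\varepsilon>0$ so that $\exp_p$ is a diffeomorphism from the ball $B_\varepsilon(0)\subset T_pM$ onto a normal neighbourhood $W$ of $p$.

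The key input is that isometries commute with the exponential map: for any $\phi\in\Lambda$ and $v\in B_\varepsilon(0)$, using $\phi(p)=p$,
$$\phi\bigl(\exp_p v\bigr)=\exp_{\phi(p)}\bigl(d\phi_p\,v\bigr)=\exp_p\bigl(d\phi_p\,v\bigr).$$
If $v\in V\cap B_\varepsilon(0)$, then $d\phi_p v=v$ for all $\phi$, so $\phi(\exp_p v)=\exp_p v$; hence $\exp_p(V\cap B_\varepsilon(0))\subseteq F$. Conversely, if $q\in F\cap W$, write $q=\exp_p v$ with $v\in B_\varepsilon(0)$ uniquely determined. For each $\phi\in\Lambda$ we get $\exp_p(d\phi_p v)=\phi(q)=q=\exp_p v$, and since $d\phi_p v$ again lies in $B_\varepsilon(0)$, injectivity of $\exp_p$ on $B_\varepsilon(0)$ forces $d\phi_p v=v$. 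Thus $v\in V$, and we conclude $F\cap W=\exp_p(V\cap B_\varepsilon(0))$.

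This identity shows that in the normal chart $\exp_p^{-1}$ the set $F$ corresponds to the linear subspace $V$, so $F$ is an embedded submanifold near $p$ with $T_pF=V$. Running this at every point of $F$ shows $F$ is locally a submanifold, and comparing the local descriptions at nearby points shows $\dim V$ is locally constant along $F$, hence constant on each connected component; therefore each component is a closed embedded submanifold of $M$. Finally, the identity $F\cap W=\exp_p\bigl(T_pF\cap B_\varepsilon(0)\bigr)$ is exactly the alternative characterization of total geodesy recalled before the statement, so each connected component of $F$ is totally geodesic in $M$.

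The only delicate points are that $\exp_p^{-1}$ turns $F$ into a genuine linear subspace on a small ball (which relies on the injectivity of $\exp_p$ there together with the commutation $\phi\circ\exp_p=\exp_p\circ d\phi_p$) and the exclusion of jumps in $\dim V$ along $F$; both are settled by the normal-chart description above. One could alternatively treat a single isometry first and then intersect over $\Lambda$, but since an intersection of totally geodesic submanifolds need not even be a submanifold, the direct argument through the common fixed subspace $V$ is the cleaner route.
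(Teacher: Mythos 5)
Your proof is correct and follows essentially the same approach as the paper: both identify $F$ near a fixed point $p$ with $\exp_p$ of the common fixed subspace $V$ of the differentials $d\phi_p$, and then invoke the exponential-map characterization of total geodesy. The only cosmetic difference is in the converse inclusion, where you argue directly from injectivity of $\exp_p$ on the ball (using that $d\phi_p$ is a linear isometry, so preserves $B_\varepsilon(0)$), while the paper appeals to uniqueness of the geodesic from $p$ to $q$ in a normal neighbourhood; these are the same observation phrased two ways.
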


\begin{proof}
Suppose $F\neq\varnothing$ and let $x\in F$. Let $V$ be the maximal vector subspace of $T_xM$ which is fixed by all the linear isometries $\{ D_x\gamma\colon T_xM\to T_xM \mid\gamma\in\Lambda\}$.  Since $M$ is complete, the exponential map $\exp_x\colon T_xM\to M$ is defined for all $\v\in T_xM$ and if $\gamma\in \Lambda$, then $\exp_x(D_x\gamma(\v))=\gamma(\exp_x(\v))$. In particular, if $\v\in V$, then $\exp_x(\v) = \exp_x(D_x\gamma(\v)) = \gamma(\exp_x(\v))$ for all $\gamma\in \Lambda$, and this shows that $\exp_x\colon V\to F$. Let $B\in T_xM$ be an open ball centred at the origin in $T_xM$, such that the restriction $\exp_x|_B\colon B\to M$ is a diffeomorphism onto its image. We want to show that $\exp_x(V\cap B)=F\cap\exp_x(B)$. Let $y\in F\cap\exp_x(B)$ and let $c\colon [0,1]\to M$ be the unique geodesic joining $x$ to $y$. That is, $c(t)=\exp_x(t\v)$, where $\v=\exp_x^{-1}(y)\in B$. If $\gamma\in\Lambda$, then $t\mapsto\exp_x(D_x\gamma(t\v))=\gamma(c(t)), t\in[0,1]$ is also a geodesic connecting $x=\gamma(x)$ to $y=\gamma(y)$, and because of the uniqueness of the geodesic from $x$ to $y$, it follows that $\gamma(c(t))=c(t)$ for all $t\in[0,1]$. Hence $\v=\exp_x^{-1}(y)$ is fixed by $D_x\gamma$ and, since this happens for all $\gamma\in\Lambda$, it implies that $\exp_x^{-1}(y)\in V\cap B$. This shows that  $\exp_x\colon V\cap B\to F\cap \exp_x(B)$ is a diffeomorphism. In conclusion, the open neighbourhood $F\cap\exp_x(B)$ of $x$ in $F$ has the structure of a totally geodesic $k$-submanifold of $M$, with $k=\dim(V)$. Hence $F$ consists of boundary-less totally geodesic submanifolds of $M$. 
\end{proof}

\begin{prop}\label{prop:strata}
Suppose $\mathcal{Q}$ is an effective $n$-dimensional orbifold without boundary. For each $k=0,\ldots,n$, the set $\Sigma_k$ has naturally the structure of a $k$-dimensional manifold without boundary. The tangent space $T_x\Sigma_k$ at a point $x\in \Sigma_k$ is canonically identified with $T_xQ^{\Gamma_x}$, the space of tangent vectors at $x$. Furthermore, if $\mathcal{Q}$ is a Riemannian orbifold, then each connected component of $\Sigma_k, k\ge 1$ is totally geodesic in $\mathcal{Q}$. 
\end{prop}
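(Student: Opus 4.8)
The plan is to work locally in a fundamental orbifold chart and show that, near each point of $\Sigma_k$, the stratum coincides with the fixed-point set of the isotropy action, which is a linear subspace upstairs; then patch these local models together and invoke Lemma \ref{lemma:fixed_point_set} for the geodesic statement. Concretely, fix $x\in\Sigma_k$ and choose a fundamental chart $(\widetilde{U}_x,\Gamma_x,\varphi_x)$ at $x$ as in Remark \ref{rem:fundam_nbhd}, with $\tilde{x}\in\varphi_x^{-1}(x)$. First I would reduce to the linear model: after possibly shrinking $\widetilde{U}_x$ and averaging a Riemannian metric, we may assume (via $\exp_{\tilde{x}}$) that $\widetilde{U}_x$ is a $\Gamma_x$-invariant ball in $T_{\tilde x}\widetilde{U}_x\cong\R^n$ on which $\Gamma_x$ acts by the linear isotropy representation, fixing $\tilde x$ at the origin. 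Write $W=(T_{\tilde x}\widetilde{U}_x)^{\Gamma_x}$, a linear subspace of dimension $k$ by the definition of singular dimension.

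The key local claim is: $\Sigma_k\cap U_x=\varphi_x(W\cap\widetilde{U}_x)$, and $\varphi_x$ restricts to a homeomorphism $W\cap\widetilde{U}_x\to\Sigma_k\cap U_x$. For the inclusion ``$\supseteq$'': if $\tilde y\in W\cap\widetilde U_x$ then $\Gamma_x$ fixes $\tilde y$, so by Remark \ref{rem:fundam_nbhd} the isotropy $\Gamma_y$ of $y=\varphi_x(\tilde y)$ equals (a conjugate of) $\Gamma_{\tilde y}=\Gamma_x$; computing in this chart one checks $T_yQ^{\Gamma_y}$ has the same dimension $k$ as $W$ (the linear action of $\Gamma_x$ on $T_{\tilde y}\widetilde U_x\cong\R^n$ is the same representation, since the action is linear and $\tilde y$ is a fixed point), so $y\in\Sigma_k$. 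For ``$\subseteq$'': if $y\in\Sigma_k\cap U_x$, lift to $\tilde y\in\widetilde U_x$; then $\Gamma_{\tilde y}\le\Gamma_x$ and its fixed subspace in $T_{\tilde y}\widetilde U_x$ has dimension $k=\dim\Sigma_k=\dim\Sigma_n$-type count. Since $\Gamma_{\tilde y}\le\Gamma_x$ and $\Gamma_x$ already fixes the $k$-dimensional subspace $W$ through the origin, a dimension count forces $\Gamma_{\tilde y}$ to fix at least $W$; combined with $\dim (T_{\tilde y}\widetilde U_x)^{\Gamma_{\tilde y}}=k=\dim W$ and the fact that $\Gamma_x\supseteq\Gamma_{\tilde y}$, one deduces $\Gamma_{\tilde y}$ fixes $\tilde y$ only if $\tilde y\in W$ — here the cleanest argument is that $(T_{\tilde y}\widetilde U_x)^{\Gamma_{\tilde y}}$ is a subspace containing the line from the origin to $\tilde y$ inside the linear chart only when $\Gamma_{\tilde y}$ fixes that whole line, i.e. fixes the origin's worth of directions, forcing $\Gamma_{\tilde y}=\Gamma_x$ and $\tilde y\in W$. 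Injectivity of $\varphi_x$ on $W\cap\widetilde U_x$ follows because distinct points of $W$ lying in the same $\Gamma_x$-orbit would have to be related by an element of $\Gamma_x$ acting as the identity on $W\ni$ both points and on a neighbourhood — contradicting effectiveness via Remark \ref{rem:effective} — unless they coincide. This gives $\Sigma_k$ its $k$-manifold structure with charts $W\cap\widetilde U_x$, and the compatibility of these charts is inherited from the orbifold change-of-charts, which restrict to linear isomorphisms of the fixed subspaces (as observed just before the definition of singular dimension). Since the uniformizing sets have no boundary, neither does $\Sigma_k$, and by construction $T_x\Sigma_k=W=T_xQ^{\Gamma_x}$.

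For the Riemannian statement, note that in a fundamental chart $(\widetilde U_x,\Gamma_x,\varphi_x)$ the lift $\widetilde{\Sigma_k}:=\varphi_x^{-1}(\Sigma_k\cap U_x)=W\cap\widetilde U_x$ is, by the local claim, exactly the fixed-point set of the finite family $\Gamma_x\subseteq\mathrm{Isom}(\widetilde U_x)$; hence by Lemma \ref{lemma:fixed_point_set} it is a totally geodesic submanifold of $\widetilde U_x$. A geodesic in a connected component of $\Sigma_k$ (with the induced metric) lifts locally to a geodesic in $W\cap\widetilde U_x$, which by total geodesy is a geodesic of $\widetilde U_x$; pushing forward, it is an orbifold geodesic of $\mathcal{Q}$ in the sense of Definition \ref{def:geodesic}. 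Equivalently, the exponential-map characterization of ``totally geodesic'' transfers chartwise: $\exp_{\tilde x}$ maps a neighbourhood of $0$ in $W$ onto $\widetilde{\Sigma_k}$, and $\varphi_x\circ\exp_{\tilde x}$ is the orbifold exponential at $x$. This proves each component of $\Sigma_k$, $k\ge 1$, is totally geodesic in $\mathcal{Q}$.

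The main obstacle is the local claim, specifically the inclusion $\Sigma_k\cap U_x\subseteq\varphi_x(W)$: one must rule out a ``stray'' point $y$ with $\dim T_yQ^{\Gamma_y}=k$ but $y\notin\varphi_x(W)$. This is handled by the linearization of the chart together with the observation that for a linear action of a finite group, the fixed-point set of any subgroup is a linear subspace, and a point lies in its own isotropy's fixed subspace iff that isotropy fixes the radial line through it — forcing, by the matching dimensions $k$ and the containment $\Gamma_y\le\Gamma_x$, the equality $\Gamma_y=\Gamma_x$ and hence $y\in\varphi_x(W)$. Getting this dimension bookkeeping exactly right, and being careful that ``singular dimension $k$'' is measured by $\dim(\mathrm{Fix})$ rather than by orbit-type, is the only delicate point; everything else is routine transfer between chart and quotient.
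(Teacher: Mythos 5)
Your proposal takes essentially the same route as the paper's proof: work in a fundamental chart $(\widetilde U_x,\Gamma_x,\varphi_x)$ at $x\in\Sigma_k$, identify $\Sigma_k\cap U_x$ with the image under $\varphi_x$ of the fixed-point set of $\Gamma_x$, invoke Lemma \ref{lemma:fixed_point_set} for the $k$-manifold structure and total geodesy of that fixed set, and glue the local pieces by restricting the orbifold change-of-charts. The linearization via $\exp_{\tilde x}$ is a stylistic variant rather than a new idea; the paper applies Lemma \ref{lemma:fixed_point_set} directly to $\mathrm{Fix}(\Gamma_x)\subseteq\widetilde U_x$ in the given Riemannian chart, which yields the connectedness and the dimension count simultaneously and avoids the small mismatch of proving topological facts with an averaged metric while deducing a geodesy statement for the original one.

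Two of your sub-arguments are over-engineered, and as written one does not quite cohere. Injectivity of $\varphi_x$ on $W=\mathrm{Fix}(\Gamma_x)$ is automatic: every point of $W$ has trivial $\Gamma_x$-orbit by definition, so there is nothing to rule out, and Remark \ref{rem:effective} plays no role; moreover the mechanism you sketch (\emph{two distinct points of $W$ in the same orbit would force some $\gamma\in\Gamma_x$ to act as the identity on a neighbourhood, contradicting effectiveness}) does not actually follow from the hypotheses. For the inclusion $\Sigma_k\cap U_x\subseteq\varphi_x(W)$, the clean version is: if $\tilde y\in\varphi_x^{-1}(\Sigma_k\cap U_x)$, then $\Gamma_{\tilde y}\le\Gamma_x$ (Remark \ref{rem:fundam_nbhd}), hence $\mathrm{Fix}(\Gamma_{\tilde y})\supseteq\mathrm{Fix}(\Gamma_x)=W$; both are $k$-dimensional linear subspaces of $\R^n$ (the first because $\varphi_x(\tilde y)\in\Sigma_k$), so they coincide, and $\tilde y\in\mathrm{Fix}(\Gamma_{\tilde y})=W$. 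The equality $\Gamma_{\tilde y}=\Gamma_x$ you reach for is a downstream consequence of $\tilde y\in W$, not a needed intermediate step, and your ``cleanest argument'' via the radial line does not establish it. Apart from these local tangles and the garbled dimension bookkeeping in that paragraph, the structure of the argument matches the paper and is correct.
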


\begin{proof} 
Assume that the orbifold $\mathcal{Q}$ has a Riemannian structure (see Remark \ref{rem:riemann_orb}). Note that if $k=0$, then $\Sigma_0$ consists of a discrete collection of points and therefore $\Sigma_0$ has the structure of a zero-dimensional manifold. 

Fix now $k\in\{1,\ldots,n\}$ such that $\Sigma_k\neq\varnothing$. Let $x\in \Sigma_k$ and let $U_x$ be a fundamental neighbourhood at $x$ with fundamental chart $(\widetilde{U}_x,\Gamma_x,\varphi_x)$. That is, $\widetilde{U}_x$ is a connected complete Riemannian manifold and the isotropy group $\Gamma_x$ of $x$ acts properly by isometries on $\widetilde{U}_x$ such that $U_x=\widetilde{U}_x/\Gamma_x$ (see Remark \ref{rem:fundam_nbhd}). 

Let $\widetilde{\Sigma}_{\Gamma_x}$ be the fixed point set of $\Gamma_x$ in $\widetilde{U}_x$ and let $\tilde{x}\in \widetilde{\Sigma}_{\Gamma_x}$ such that $\varphi_x(\tilde{x})=x$.  By Lemma \ref{lemma:fixed_point_set}, the set $\widetilde{\Sigma}_{\Gamma_x}$ has the structure of a totally geodesic submanifold of $\widetilde{U}_x$ of dimension $k={\rm dim}((T_{\tilde{x}}\widetilde{U}_x)^{\Gamma_x})$.  Moreover, since $(\widetilde{U}_x,\Gamma_x,\varphi_x)$ is a fundamental orbifold chart, the manifold $\widetilde{\Sigma}_{\Gamma_x}$ is connected and the restriction $\varphi_x|_{\widetilde{\Sigma}_{\Gamma_x}}\colon \widetilde{\Sigma}_{\Gamma_x}\to \Sigma_k\cap U_x$ is injective onto its image. Thus  $\varphi_x|_{\widetilde{\Sigma}_{\Gamma_x}}$ induces a homeomorphism from $\widetilde{\Sigma}_{\Gamma_x}$ onto the open neighbourhood $\Sigma_k\cap U_x$ of $x$ in $\Sigma_k$. 

It is clear from the construction that such open charts $(\Sigma_k\cap U_x, \varphi_x|_{\widetilde{\Sigma}_{\Gamma_x}})$ exist at each point $x\in\Sigma_k$, and we can use the change of charts of the orbifold $\mathcal{Q}$ to patch together these local charts. 
Let $z\in \Sigma_k\cap U_x\cap U_y$ for some $x$ and $y$ in $\Sigma_k$ with fundamental neighbourhoods $U_x$ and $U_y$. Then $\varphi_x^{-1}(z)\in\widetilde{\Sigma}_{\Gamma_x}$, $\varphi_y^{-1}(z)\in\widetilde{\Sigma}_{\Gamma_y}$ and the isotropy groups $\Gamma_x$, $\Gamma_y$ and $\Gamma_z$ are all isomorphic. If $h\colon \widetilde{U}\to \widetilde{V}$ is a change of orbifold charts from an open connected neighbourhood $\widetilde{U}$ of $\varphi_x^{-1}(z)$ onto a neighbourhood $\widetilde{V}$ of $\varphi_y^{-1}(z)$ such that $\varphi_y\circ h=\varphi_x|_{\widetilde{U}}$, then the restriction $h|_{\widetilde{\Sigma}_{\Gamma_x}\cap\widetilde{U}}\colon \widetilde{\Sigma}_{\Gamma_x}\cap\widetilde{U}\to\widetilde{\Sigma}_{\Gamma_y}\cap\widetilde{V}$ is a smooth Riemannian isometry which gives a change charts between $(\Sigma_k\cap U_x, \varphi_x|_{\widetilde{\Sigma}_{\Gamma_x}})$ and $(\Sigma_k\cap U_y,\varphi_y|_{\widetilde{\Sigma}_{\Gamma_y}})$. The collection of all such charts together with the corresponding change of charts as above give $\Sigma_k$ the structure of a Riemannian $k$-dimensional manifold without boundary. 

We can proceed as in the proof of Lemma \ref{lemma:fixed_point_set} and show that for each $x\in \Sigma_k$, the composition $\varphi_x\circ\exp_{\tilde{x}}\colon T_{\tilde{x}}\widetilde{U}_x\to U_x$ restricts to a diffeomorphism from a neighbourhood of $0\in (T_{\tilde{x}}\widetilde{U}_x)^{\Gamma_x}$  onto a neighbourhood of $x$ in $\Sigma_k\cap U_x$. This shows that the connected components of $\Sigma_k$ are totally geodesic in $\mathcal{Q}$. 
\end{proof}

If $S\subseteq\Sigma_k$ is a connected component of singular dimension $k$, then the isotropy groups of any two points in $S$ are naturally isomorphic; and we denote by $\Gamma_S$ the group representing the isomorphism class of the isotropy along the component $S$.  For a developable orbifold $\mathcal{Q}=M/\Gamma$, this isomorphism class  is just the conjugacy class of $\Gamma_x$ in $\Gamma$, for some $x\in S$. We note, however, that points with the same singular dimension $k$ but contained in different connected components of $\Sigma_k$ need not have isomorphic isotropy groups. For example, both singular points in the $\Z_p\shy\Z_q$-football orbifold with $p\neq q$ belong to $\Sigma_0$, but they have non-isomorphic isotropy groups $\Z_p$ and $\Z_q$, respectively (see for instance \cite[Example 2.37]{Dra:11Clo:aa}). 

\begin{rem}\label{rem:strata}
If $x\in\Sigma_k$ and $U_x$ is a fundamental neighbourhood at $x$ with uniformizing chart $(\widetilde{U}_x,\Gamma_x,\varphi_x)$,  then $\Sigma_k\cap U_x$ is connected and $k$ is the smallest singular dimension in $U_x$. Indeed, if $\Sigma_\ell\cap U_x\neq\varnothing$ and $x\notin\Sigma_\ell$, then $\ell>k$ and the isotropy along the connected components of $\Sigma_\ell$ corresponds to the conjugacy class of a proper subgroup of $\Gamma_x$ (see Remark \ref{rem:fundam_nbhd}). Conversely, for each proper subgroup $\Gamma_x'$ of $\Gamma_x$, the fixed point set $\widetilde{\Sigma}_{\Gamma_x'}$ of $\Gamma'_x$ in $\widetilde{U}_x$ has dimension $\ell\ge k$, and if $\ell>k$, then the corresponding component of $\Sigma_\ell\cap U_x$ will have $x$ in its closure. If a subgroup $\Gamma_x''$ is conjugate to $\Gamma_x'$ in $\Gamma_x$, then the fixed point set $\widetilde{\Sigma}_{\Gamma_x''}$ has dimension $\ell$ and projects to the same component in $\Sigma_\ell$ as $\widetilde{\Sigma}_{\Gamma_x'}$. With the notation above, when $k=0$ and $\ell=1$, it is possible that there are two distinct connected components of $\Sigma_1\cap U_x$ corresponding to the conjugacy class of $\Gamma'_x$.
 \end{rem}

In a compact orbifold $\mathcal{Q}$ the number of connected components of each of the manifolds $\Sigma_k$ is finite. 

\begin{prop}\label{prop:finitely_many_components}
If $\mathcal{Q}$ is an effective compact connected $n$-orbifold, then for each $k = 0,\ldots,n,$ the manifolds $\Sigma_k$ have finitely many connected components.  
\end{prop}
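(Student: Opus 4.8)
The plan is to exploit compactness of $Q$ together with the local finiteness built into orbifold atlases, reducing the global statement to a finite cover by fundamental charts and a local analysis of each chart. First I would invoke Remark \ref{rem:fundam_nbhd} to refine the atlas defining $\mathcal{Q}$ to one consisting only of fundamental charts $\{(\widetilde{U}_i,\Gamma_i,\varphi_i)\}_{i\in\mathcal{I}}$, and then use compactness of $Q$ to extract a finite subcover $U_1,\ldots,U_m$ of $Q$ by the associated fundamental neighbourhoods. Since each $x\in\Sigma_k$ lies in some $U_j$, we have $\Sigma_k=\bigcup_{j=1}^m(\Sigma_k\cap U_j)$, so it suffices to show that each $\Sigma_k\cap U_j$ has finitely many connected components; a set that is a finite union of sets each with finitely many components has finitely many components.

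The heart of the argument is therefore the claim that, for a fundamental chart $(\widetilde{U}_j,\Gamma_j,\varphi_j)$ with $\Gamma_j$ \emph{finite}, the set $\Sigma_k\cap U_j$ has only finitely many connected components. Here I would appeal to the local picture developed in Remark \ref{rem:strata}: the strata of dimension $\ell$ meeting $U_j$ correspond to conjugacy classes of subgroups $\Gamma_j'\le\Gamma_j$ whose fixed-point set $\widetilde{\Sigma}_{\Gamma_j'}\subseteq\widetilde{U}_j$ has dimension $\ell$. Since $\Gamma_j$ is a finite group, it has only finitely many subgroups, hence only finitely many conjugacy classes of subgroups. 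For each such subgroup $\Gamma_j'$, the fixed-point set $\widetilde{\Sigma}_{\Gamma_j'}$ is a totally geodesic (by Lemma \ref{lemma:fixed_point_set}), hence in particular smooth, submanifold of $\widetilde{U}_j$; and since $(\widetilde{U}_j,\Gamma_j,\varphi_j)$ is a \emph{fundamental} chart, $\widetilde{U}_j$ and the relevant fixed-point sets are connected (cf.\ the proof of Proposition \ref{prop:strata}, where $\widetilde{\Sigma}_{\Gamma_x}$ is shown connected). Thus each conjugacy class contributes at most one connected piece to the corresponding $\Sigma_\ell\cap U_j$ — with the one exception noted at the end of Remark \ref{rem:strata}, where in the case $k=0$, $\ell=1$ a single conjugacy class may split into two components of $\Sigma_1\cap U_j$. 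In all cases the count is bounded by twice the number of conjugacy classes of subgroups of $\Gamma_j$, which is finite. Taking $\ell=k$ gives that $\Sigma_k\cap U_j$ has finitely many components, completing the local step.

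Assembling these: $\Sigma_k$ is covered by the finitely many sets $\Sigma_k\cap U_j$, $j=1,\ldots,m$, each with finitely many connected components, so $\Sigma_k$ itself has finitely many connected components. I would run this for every $k=0,\ldots,n$, noting the statement is trivial when $k=n$ since $\Sigma_n=Q_{\rm reg}$ is open in the connected space $Q$ (and when $\Sigma_k=\varnothing$).

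The main obstacle I anticipate is the bookkeeping in the local step: making precise that a connected component of $\Sigma_k\cap U_j$ is determined (up to the $k=0,\ell=1$ ambiguity) by a conjugacy class of subgroups of $\Gamma_j$, i.e.\ that two points of $\Sigma_k\cap U_j$ with conjugate isotropy and lying in fixed-point sets of conjugate subgroups actually lie in the same component. This is essentially the content of Remark \ref{rem:strata} (the $\Gamma_j$-orbit of $\widetilde{\Sigma}_{\Gamma_j'}$ projects to a single connected subset of $\Sigma_\ell\cap U_j$ because $\widetilde{\Sigma}_{\Gamma_j'}$ is connected and $\varphi_j$ is continuous), so the work is in citing it carefully rather than in any genuinely new difficulty. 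One should also be slightly careful that passing to a finite subcover may split a connected component of $\Sigma_k$ across several charts, but this only helps — it can increase, not decrease, the apparent number of pieces, and a finite union of finite-component sets still has finitely many components.
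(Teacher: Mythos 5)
Your proposal is correct and follows essentially the same path as the paper's proof: cover $Q$ by finitely many fundamental neighbourhoods using compactness, then bound the components of $\Sigma_k\cap U_j$ by (twice, when $k=1$) the number of conjugacy classes of subgroups of the finite group $\Gamma_j$, citing Remark \ref{rem:strata}. The only cosmetic difference is that the paper first disposes of $k=0$ separately by noting $\Sigma_0$ is discrete hence finite, whereas you fold it into the general local count; both are fine.
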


\begin{proof}
 Note first that $\Sigma_0$ is a discrete subset of $Q$, and since $Q$ is compact, $\Sigma_0$ is finite. Let $\{U_{x_1},\ldots,U_{x_j}\}$ be a finite open cover of $Q$ by fundamental neighbourhoods, where each $U_{x_i}$ is uniformized by a fundamental chart $(\widetilde{U}_{x_i},\Gamma_{x_i},\varphi_{x_i})$ at $x_i$. Such a finite open cover exists because $Q$ is compact. Note that $\Sigma_0\subseteq \{x_1,\ldots,x_j\}.$ For each $x_i$, the number of connected components of $\Sigma_k\cap U_{x_i}$ is bounded from above by the number of conjugacy classes of  subgroups of $\Gamma_{x_i}$ when $k\ge 2$, and by twice this number when $k=1$ (cf. Remark \ref{rem:strata}). Since each isotropy group $\Gamma_{x_i}$ is finite, the number of singular components in each $U_{x_i}$ is finite, and the conclusion follows. 
\end{proof}

Let $S$ be a connected component of $\Sigma_k$ for some fixed $k$ with $0\le k\le n$, where $n$ is the dimension of the orbifold $\mathcal{Q}$. We define the \emph{frontier of $S$} to be the set $\fr(S)$ consisting of points in $Q\smallsetminus S$ which are limit points of sequences in $S$; and define the \emph{closure of $S$} to be the union $\cl(S)=S\cup\fr(S)$. If $\fr(S)=\varnothing$, then $\cl(S)=S$ and we say that the component $S$ is \emph{closed}. 

While the components of $\Sigma_0$ are closed (since $\Sigma_0$ is a discrete set of points), this is not necessarily true for connected components with singular dimension $k>0$.  If $S\subseteq\Sigma_k$ is such an open component, then the connected components of the frontier of $S$ have singular dimensions $<k$ and larger isotropy groups, having proper subgroups isomorphic to $\Gamma_S$ (cf. Remark \ref{rem:strata}). As a particular case, when $k=n$, the stratum $\Sigma_n=Q_{\rm reg}$ is connected, open and dense in $Q$. In this case, the frontier $\fr(\Sigma_n)=Q\smallsetminus Q_{\rm reg}=\Sigma$ is the singular locus, and the closure $\cl(\Sigma_n)=Q$ is the underlying topological space of the orbifold $\mathcal{Q}$.
 
In general, the closure $\cl(S)$ of an open connected component $S\subseteq\Sigma_k$ with $0<k<n$ need not have the structure of an orbifold. However, if $k>0$ is the {\it smallest} singular dimension such that $\Sigma_k\neq\varnothing$, then the closure of the connected components of $\Sigma_k$ can be given a natural non-effective orbifold structure.  In the following theorem we prove this in the case when $\Sigma_1=\varnothing$, which implies that the smallest positive singular dimension $k$ for the orbifold $\mathcal{Q}$ is at least two. The case when $\Sigma_1\neq\varnothing$ will be considered in Theorem \ref{thm:singular_1}. 

\begin{theorem}\label{thm:closed_stratum}
Suppose $\mathcal{Q}$ is a compact effective $n$-orbifold such that  $\Sigma_1=\varnothing$. Let $k>0$ be the smallest singular dimension such that $\Sigma_k\neq\varnothing$. Then the closure $\cl(S)$ of each connected component $S\subseteq \Sigma_k$ has a natural structure of a compact $k$-orbifold $\mathcal{S}$ such that the associated effective orbifold $\mathcal{S}_{\rm eff}$ has only zero-dimensional singular locus or is a smooth manifold. If $\mathcal{Q}$ is a Riemannian orbifold, then $\mathcal{S}$ is totally geodesic in $\mathcal{Q}$.
\end{theorem}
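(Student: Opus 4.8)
The plan is to construct an orbifold atlas on $\cl(S)$ directly, using two families of charts: charts over the open part $S$, where $\cl(S)$ already carries the $k$-manifold structure of Proposition~\ref{prop:strata}, and charts over the frontier points. After equipping $\mathcal Q$ with a Riemannian metric (Remark~\ref{rem:riemann_orb}; the smooth orbifold $\mathcal S$ will not depend on this choice), I would first observe that, since $\Sigma_j=\varnothing$ for all $0<j<k$ (this is where $\Sigma_1=\varnothing$, hence $k\ge 2$, is used) and $\Sigma_0$ is finite by compactness, the frontier $\fr(S)$ is a finite subset of $\Sigma_0$; in particular $\cl(S)=S\cup\fr(S)$ is closed in $Q$, hence compact. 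For $x\in S$ with fundamental chart $(\widetilde{U}_x,\Gamma_x,\varphi_x)$ I take $(\widetilde{\Sigma}_{\Gamma_x},\Gamma_x,\varphi_x|_{\widetilde{\Sigma}_{\Gamma_x}})$ as a chart for the neighbourhood $\Sigma_k\cap U_x=\cl(S)\cap U_x$ of $x$ in $\cl(S)$; since $\Gamma_x$ fixes its own fixed-point set $\widetilde{\Sigma}_{\Gamma_x}$ pointwise, this is a non-effective chart with kernel $\Gamma_x\cong\Gamma_S$, and $\varphi_x|_{\widetilde{\Sigma}_{\Gamma_x}}$ is a homeomorphism onto $\cl(S)\cap U_x$ by Proposition~\ref{prop:strata}.

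For a frontier point $p\in\fr(S)$ I would take a fundamental chart $(\widetilde{U}_p,\Gamma_p,\varphi_p)$, which we may assume to be a normal ball $\exp_{\tilde p}(B)$, shrunk so that $U_p\cap\Sigma_0=\{p\}$. Fixing a subgroup $\Gamma'\le\Gamma_p$ in the conjugacy class of $\Gamma_S$, I set $\widetilde{W}_p=\widetilde{\Sigma}_{\Gamma'}\cap\widetilde{U}_p=\exp_{\tilde p}\bigl((T_{\tilde p}\widetilde{U}_p)^{\Gamma'}\cap B\bigr)$, a connected totally geodesic $k$-submanifold of $\widetilde{U}_p$ through $\tilde p$ (Lemma~\ref{lemma:fixed_point_set}). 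The key step is a local analysis of $\widetilde{W}_p$: (i)~$\widetilde{W}_p\setminus\{\tilde p\}$ is connected (this is where $k\ge 2$ is essential) and every one of its points has isotropy exactly $\Gamma'$ (by the choice of $U_p$ it lies over $\Sigma_k$, hence by connectedness and $p\in\fr(S)$ over $S$; and being in $\widetilde{\Sigma}_{\Gamma'}$ with isotropy conjugate to $\Gamma_S$ of the same order as $\Gamma'$, its isotropy is $\Gamma'$), so $\varphi_p^{-1}(\cl(S)\cap U_p)=\Gamma_p\cdot\widetilde{W}_p$ and $\varphi_p$ induces a homeomorphism from $\widetilde{W}_p/H_p$ onto $\cl(S)\cap U_p$, where $H_p=\{g\in\Gamma_p\mid g\widetilde{W}_p=\widetilde{W}_p\}=\{g\in\Gamma_p\mid g\Gamma' g^{-1}=\Gamma'\}$; (ii)~the kernel of the $H_p$-action on $\widetilde{W}_p$ is exactly $\Gamma'$, since an element fixing $\widetilde{W}_p$ pointwise fixes some $\tilde y\in\widetilde{W}_p\setminus\{\tilde p\}$ and so lies in $\Gamma_{\tilde y}=\Gamma'$; and (iii)~$(T_{\tilde p}\widetilde{W}_p)^{H_p}=0$ unless $H_p=\Gamma'$, because a nonzero $H_p$-fixed vector in $(T_{\tilde p}\widetilde{U}_p)^{\Gamma'}$ would exponentiate to a curve in $\widetilde{W}_p\setminus\{\tilde p\}$ pointwise fixed by $H_p$, forcing $H_p\subseteq\Gamma'$. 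Hence $(\widetilde{W}_p,H_p,\varphi_p|_{\widetilde{W}_p})$ is a non-effective $k$-chart at $p$ with kernel $\Gamma'\cong\Gamma_S$ whose effective reduction has isotropy $H_p/\Gamma'$, and by (iii) $p$ is either a regular or a zero-dimensional singular point of that reduction.

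It then remains to assemble these charts into an atlas and read off the conclusions. For a change of charts between a $p$-chart and an $S$-chart I would choose the fundamental chart at $x\in S$ inside $(\widetilde{U}_p,\Gamma_p,\varphi_p)$ with the lift $\tilde x$ picked so that $\Gamma_{\tilde x}=\Gamma'$; then $\widetilde{\Sigma}_{\Gamma_x}=\widetilde{\Sigma}_{\Gamma'}\cap\widetilde{U}_x\subseteq\widetilde{W}_p$ and the inclusion $\iota\colon\widetilde{\Sigma}_{\Gamma_x}\hookrightarrow\widetilde{W}_p$ is the required change of charts, satisfying $\varphi_p|_{\widetilde{W}_p}\circ\iota=\varphi_x|_{\widetilde{\Sigma}_{\Gamma_x}}$ and restricting to the identity on the common kernel $\Gamma'$, so the kernel-isomorphism condition for non-effective atlases holds. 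Changes of charts between two $S$-charts, or between two $p$-charts (at the same or different frontier points, or for different choices of $\Gamma'$), are restrictions of changes of charts of $\mathcal Q$, namely conjugations by elements of $\Gamma_p$, and these carry kernel to kernel; this also gives independence of the construction from all the choices, so we obtain a well-defined compact $k$-orbifold $\mathcal S$. Since the $S$-charts have trivial effective isotropy and the finitely many $p$-charts have only zero-dimensional singular effective isotropy, $\mathcal S_{\rm eff}$ is a manifold or has zero-dimensional singular locus. Finally, being totally geodesic is local: $S$ is totally geodesic in $\mathcal Q$ by Proposition~\ref{prop:strata}, and near each $p$ the model $\widetilde{W}_p$ is totally geodesic in $\widetilde{U}_p$ by Lemma~\ref{lemma:fixed_point_set}, so every geodesic of $\mathcal S$ is a geodesic of $\mathcal Q$.

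I expect the main obstacle to be the local bookkeeping around the frontier points in step~(i): proving that $\varphi_p^{-1}(\cl(S)\cap U_p)$ is a single $\Gamma_p$-orbit of the connected piece $\widetilde{W}_p$ — the step that genuinely needs $k\ge 2$, and whose breakdown is precisely why the case $\Sigma_1\neq\varnothing$ is handled separately in Theorem~\ref{thm:singular_1} — together with establishing that the isotropy is constant and equal to $\Gamma'$ along $\widetilde{W}_p\setminus\{\tilde p\}$. By comparison, checking the change-of-charts and kernel conditions in the non-effective setting, and the independence of the construction from the auxiliary metric, should be routine though somewhat delicate.
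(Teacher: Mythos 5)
Your proof follows essentially the same approach as the paper's: for each frontier point $p$ you take a fundamental chart, fix a conjugacy representative $\Gamma'$ of $\Gamma_S$ inside $\Gamma_p$, use the connected fixed-point set $\widetilde{W}_p={\rm Fix}(\Gamma')\cap\widetilde{U}_p$ as the uniformizing $k$-manifold, and let the normalizer $H_p=N_{\Gamma_p}(\Gamma')$ act on it with kernel $\Gamma'$ to produce a non-effective chart, which is precisely the paper's construction with $(\widetilde{S}_x,N_{\Gamma_x}(\Gamma),\varphi_x|_{\widetilde{S}_x})$. The only cosmetic difference is your extra step~(iii), verifying directly that $(T_{\tilde p}\widetilde{W}_p)^{H_p}=0$ when $H_p\neq\Gamma'$, whereas the paper instead just observes that the singular locus of $\mathcal{S}_{\rm eff}$ is contained in the finite set $\fr(S)$ and is therefore zero-dimensional.
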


\begin{proof}
Let $\mathcal{Q}$ be as in the statement of the theorem. Since $\Sigma_1=\varnothing$, the smallest positive singular dimension $k$ such that $\Sigma_k\neq\varnothing$ is at least 2. If $k=n$, then $S=Q_{\rm reg}$ and the conclusion follows since $\cl(S)=Q$ and $\mathcal{S}=\mathcal{Q}$.

Fix $k\in\{2,\ldots, n-1\}$ and let $S\subseteq \Sigma_k$ be a connected component of singular dimension $k$. Let $\Gamma_S$ denote the isotropy  along the component $S$.

When $S$ is closed, by Proposition \ref{prop:strata}, $\cl(S)=S$ has the structure of a $k$-dimensional manifold which is compact, since $\mathcal{Q}$ is compact; and if $\mathcal{Q}$ is a Riemannian orbifold, then $S$ is totally geodesic in $\mathcal{Q}$.

Assume now that $S$ is open. The points in the frontier of $S$ belong to singular strata of dimension $<k$ and since $k>0$ is the smallest positive singular dimension, i.e. $\Sigma_\ell=\varnothing$ for all $0<\ell<k$, it follows that $\fr(S)\subseteq\Sigma_0$. Moreover, because $\mathcal{Q}$ is compact, by Proposition \ref{prop:finitely_many_components}, the set $\Sigma_0$ is finite. Thus $\fr(S)$ consists of a finite set of points with singular dimension zero. 

Fix $x\in \fr(S)$. Let $(\widetilde{U}_x,\Gamma_x,\varphi_x)$ be a fundamental orbifold chart at $x$ and let $\tilde{x}=\varphi_x^{-1}(x)\in \widetilde{U}_x$. Denote by $S_x^\circ=S\cap \varphi_x(\widetilde{U}_x)$ and $S_x=S_x^\circ\cup\{x\}$. Since $k\ge2$, $S_x^\circ$ is connected ($S_x^\circ$ is homeomorphic to a punctured $k$-disk). 

Let $\widetilde{S}_x^\circ$ be a connected component in $\varphi_x^{-1}(S_x^\circ)$ and let $\Gamma\le\Gamma_x$ be the isotropy group along $\widetilde{S}_x^\circ$. Clearly $\Gamma\simeq\Gamma_S$, the isotropy along $S$. Then $\widetilde{S}_x=\widetilde{S}_x^\circ\cup\tilde{x}$ is the fixed point set of $\Gamma$ in $\widetilde{U}_x$ and by Proposition \ref{prop:strata}, $\widetilde{S}_x$ is a $k$-dimensional submanifold of $\widetilde{U}_x$. The action of $\Gamma_x$ on $\widetilde{U}_x$ restricts to $\widetilde{S}_x$ to a non-effective smooth action by $N_{\Gamma_x}(\Gamma)$, the normalizer of $\Gamma$ in $\Gamma_x$, and the kernel of this action is $\Gamma$. Moreover, the restriction $\varphi_x|_{\widetilde{S}_x}$ induces a homeomorphism from the quotient space $\widetilde{S}_x/N_{\Gamma_x}(\Gamma)$ of this action onto the open set $S_x\subset {\rm cl}(S)$. Then $(\widetilde{S}_x,N_{\Gamma_x}(\Gamma),\varphi_x|_{\widetilde{S}_x})$ defines a non-effective $k$-dimensional orbifold chart at $x$ over $S_x$. The corresponding reduced orbifold chart over $S_x$ is $(\widetilde{S}_x,N_{\Gamma_x}(\Gamma)/\Gamma,\varphi_x|_{\widetilde{S}_x})$, and in the associated effective orbifold structure on $S_x$ each point in $S_x^\circ$ has trivial isotropy and the point $x$ has isotropy group $N_{\Gamma_x}(\Gamma)/\Gamma$. Note that if $\Gamma$ is self normalizing in $\Gamma_x$, then $x$ is actually a smooth point of $S_x$ and in this case $S_x$ has the structure of a $k$-dimensional manifold.

The number of connected components $\widetilde{S}_x^\circ$ in $\varphi_x^{-1}(S_x^\circ)$ is in one to one correspondence with the number of conjugates $\gamma\Gamma\gamma^{-1}$ of $\Gamma$ in $\Gamma_x$, and it is thus equal to $|\Gamma_x\colon N_{\Gamma_x}(\Gamma)|$, the index of the normalizer of $\Gamma$ in $\Gamma_x$. In particular, if the subgroup $\Gamma$ is not normal in $\Gamma_x$, then the component $\widetilde{S}_x^\circ$ is not unique. However, the $k$-dimensional orbifold structure on $S_x$ is independent of the choice of the component $\widetilde{S}_x^\circ$ in $\varphi_x^{-1}(S_x^\circ)$. Indeed, if $\widetilde{S}_x^{\circ'}$ is a different choice of such a component, then there exists $\gamma\in\Gamma_x\smallsetminus N_{\Gamma_x}(\Gamma)$ such that $\widetilde{S}_x^{\circ'}=\gamma.\widetilde{S}_x^\circ$ and $\widetilde{S}_x'=\widetilde{S}_x^{\circ'}\cup \tilde{x}$ is the fixed point set of the conjugate $\Gamma'=\gamma\Gamma\gamma^{-1}$. Note that $N_{\Gamma_x}(\Gamma')=\gamma N_{\Gamma_x}(\Gamma)\gamma^{-1}$. The map $h=\gamma|_{\widetilde{S}_x}\colon \widetilde{S}_x\to\widetilde{S}_x'$ is a smooth diffeomorphism and gives a change of charts between $(\widetilde{S}_x,N_{\Gamma_x}(\Gamma),\varphi_x|_{\widetilde{S}_x})$ and $(\widetilde{S}_x',N_{\Gamma_x}(\Gamma'),\varphi_x|_{\widetilde{S}_x'})$. This shows that the two orbifold charts give the same orbifold structure on $S_x$.

Let now $y\in S$ and let $(\widetilde{U}_y,\Gamma_y,\varphi_y)$ be a fundamental orbifold chart at $y$. Denote $S_y=S\cap\varphi_y(\widetilde{U}_y)$ and let $\widetilde{S}_y=\varphi_y^{-1}(S_y)$. Then $\widetilde{S}_y={\rm Fix}(\Gamma_y)$ is a $k$-dimensional submanifold of $\widetilde{U}_y$ and the action of $\Gamma_y$ on $\widetilde{U}_y$ restricts to the trivial action on $\widetilde{S}_y$ with kernel $\Gamma_y\simeq\Gamma_S$. This gives a non-effective orbifold chart $(\widetilde{S}_y,\Gamma_y,\varphi_y|_{\widetilde{S}_y})$ over $S_y$ and thus making $S_y$ into a $k$-dimensional orbifold having isotropy $\Gamma_y$ at each of its points.

We can proceed as in the proof of Proposition \ref{prop:strata} and use restrictions of the change of charts of the orbifold $\mathcal{Q}$ to patch together these local orbifolds charts and obtain in this way a non-effective $k$-dimensional orbifold structure $\mathcal{S}$ on $\cl(S)$. In the associated effective orbifold $\mathcal{S}_{\rm eff}$ of $\mathcal{S}$,  we have that $S\subseteq (\mathcal{S}_{\rm eff})_{\rm reg}$ and the singular locus of $\mathcal{S}_{\rm eff}$, $\Sigma_\mathcal{S}\subseteq \fr(S)\subseteq \Sigma_0$. It is clear from construction of the local charts of $\mathcal{S}$ that if $\mathcal{Q}$ is a Riemannian orbifold, then $\mathcal{S}$ is totally geodesic in $\mathcal{Q}$.
\end{proof}

Theorem \ref{thm:closed_stratum} holds if $\mathcal{Q}$ is assumed to be a complete orbifold, and in this case, the orbifold $\mathcal{S}$ is a complete orbifold which is not necessarily compact. 

\section{Existence of geodesics}\label{sec:geodesics01}

Suppose $\mathcal{Q}$ is an effective compact Riemannian $n$-orbifold and let $Q=\bigsqcup\limits_{k=0}^n\Sigma_k$ be the stratification by singular dimension of $Q$. By Proposition \ref{prop:strata}, each connected component of $\Sigma_k$ has a natural structure of a Riemannian $k$-manifold which is totally geodesic in $\mathcal{Q}$; and when $k>0$, this implies that nontrivial closed geodesics contained in these $k$-manifolds give rise to closed geodesics of positive length on the orbifold $\mathcal{Q}$. In this section, we investigate sufficient conditions on the singular stratification of the orbifold $\mathcal{Q}$ which imply the existence of closed geodesics on $\mathcal{Q}$. 

To begin, we recall that any compact Riemannian manifold without boundary admits at least one closed geodesic of positive length \cite{Lyu:51Var:aa}. In a compact orbifold,  a closed connected component of $\Sigma_k$ is compact, and we have the following result.

\begin{theorem}\label{thm:closed_component}
Let $\mathcal{Q}$ be a compact Riemannian $n$-orbifold and assume that there exists a closed connected component of singular dimension $k$ for some $k>0$. Then $\mathcal{Q}$ admits at least one closed geodesic of positive length.
\end{theorem}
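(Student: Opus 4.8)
The plan is to reduce the statement immediately to the classical Lyusternik--Fet theorem applied to the totally geodesic submanifold supplied by the singular stratification. Let $S$ be a closed connected component of $\Sigma_k$ with $k>0$. By Proposition \ref{prop:strata}, $S$ carries a natural structure of a $k$-dimensional Riemannian manifold without boundary that is totally geodesic in $\mathcal{Q}$. Since the component $S$ is closed, $\fr(S)=\varnothing$, so $\cl(S)=S$ and $S$ is a closed subset of $Q$; as $Q$ is compact, $S$ is a \emph{compact} Riemannian $k$-manifold without boundary, and the hypothesis $k>0$ guarantees $\dim S\ge 1$. The theorem of Lyusternik and Fet \cite{Lyu:51Var:aa} then yields a closed geodesic $c\colon[a,b]\to S$ of positive length in $S$ equipped with the metric induced from $\mathcal{Q}$.

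The second step is to promote $c$ to a closed orbifold geodesic on $\mathcal{Q}$ in the sense of \S\ \ref{ssec:paths}. I would cover $c([a,b])$ by finitely many fundamental charts $(\widetilde{U}_i,\Gamma_i,\varphi_i)$ centred at points of $S$ and pick a subdivision $a=t_0\le\cdots\le t_m=b$ fine enough that each $c|_{[t_{i-1},t_i]}$ lies in one such $U_i=\varphi_i(\widetilde{U}_i)$. By the proof of Proposition \ref{prop:strata}, $\varphi_i$ restricts to a homeomorphism from the fixed-point set $\widetilde{\Sigma}_{\Gamma_i}\subset\widetilde{U}_i$ onto $S\cap U_i$, so $c|_{[t_{i-1},t_i]}$ lifts through $\varphi_i$ to a curve $\tilde{c}_i$ in $\widetilde{\Sigma}_{\Gamma_i}$; since $\widetilde{\Sigma}_{\Gamma_i}$ is totally geodesic in $\widetilde{U}_i$ (Lemma \ref{lemma:fixed_point_set}), $\tilde{c}_i$ is an honest geodesic of $\widetilde{U}_i$. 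The changes of charts of $\mathcal{Q}$ are Riemannian isometries carrying fixed-point sets to fixed-point sets, so at each breakpoint one can choose a change of charts $h_i$ with $h_i\colon\tilde{c}_i(t_i)\mapsto\tilde{c}_{i+1}(t_i)$ and $Dh_i\colon\dot{\tilde{c}}_i(t_i)\mapsto\dot{\tilde{c}}_{i+1}(t_i)$, because both sides are lifts of the same point and velocity of $c$ in $S$. This produces a geodesic path $(\tilde{c}_1,h_1,\ldots,\tilde{c}_m,h_m)$ over $c$; finally, since $c(a)=c(b)$ and $\dot{c}(a)=\dot{c}(b)$ on the manifold $S$, the closing change of charts $h_m$ can be taken so that $h_m\colon\tilde{c}_m(b)\mapsto\tilde{c}_1(a)$ and $Dh_m\colon\dot{\tilde{c}}_m(b)\mapsto\dot{\tilde{c}}_1(a)$, which exhibits $[c]$ as a closed orbifold geodesic on $\mathcal{Q}$. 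Its length equals the length of $c$ in $S$, which is positive.

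The only point that requires genuine care is this last promotion: one must verify that the local geodesic lifts inside the fixed-point sets $\widetilde{\Sigma}_{\Gamma_i}$ are mutually compatible via restrictions of the orbifold changes of charts, so that the velocity-matching conditions at all breakpoints \emph{and} at the basepoint can be met simultaneously. This is where the total geodesy of $\widetilde{\Sigma}_{\Gamma_i}$ in $\widetilde{U}_i$ and the isometry property of the changes of charts are essential; once this bookkeeping is in place, the conclusion is an immediate consequence of the compactness of $S$ and the classical existence theorem for closed geodesics on compact Riemannian manifolds.
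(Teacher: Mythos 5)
Your proof is correct and follows the same route as the paper: apply Proposition \ref{prop:strata} to give the closed component $S$ the structure of a compact totally geodesic Riemannian $k$-manifold, invoke Lyusternik--Fet to get a closed geodesic in $S$, and promote it to a closed orbifold geodesic in $\mathcal{Q}$. The paper treats the final promotion step in a single clause, whereas you spell out the bookkeeping with charts and changes of charts; this is a harmless and welcome elaboration, not a different argument.
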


\begin{proof}
Let $S\subset\Sigma_k$ be a nonempty closed connected component of singular dimension $k>0$. Since $\mathcal{Q}$ is compact, by Proposition \ref{prop:strata}, $S$ has the structure of a compact $k$-dimensional Riemannian manifold without boundary which is totally geodesic in $\mathcal{Q}$. By the result of Lyusternik and Fet  in \cite{Lyu:51Var:aa}, $S$ admits a nontrivial closed geodesic which, in turn, gives rise to a closed orbifold geodesic of positive length in $\mathcal{Q}$.
\end{proof}

Note that for $k=n$ in Theorem \ref{thm:closed_component}, $\Sigma_n=Q_{\rm reg}$ is connected, and $\Sigma_n$ is closed if and only if the orbifold $\mathcal{Q}$ has the structure of a compact manifold. Note also that if $c\colon [0,1]\to \Sigma_k$ is a smooth parametrized geodesic contained in a connected component $S\subset \Sigma_k$, then the parametrized orbifold geodesics $[c]\colon [0,1]\to \mathcal{Q}$ covering $c$ are in one-to-one correspondence with the conjugacy classes of elements in $\Gamma_S$. In particular, for each parametrized geodesic $c\colon [0,1]\to Q_{\rm reg}$ there exists a unique orbifold geodesic $[c]\colon [0,1]\to \mathcal{Q}$ with underlying path $c$. 

The following corollary is a direct consequence of Theorem \ref{thm:closed_component}.

\begin{cor}\label{cor:no_zero_singularity}
Suppose $\mathcal{Q}$ is a compact connected Riemannian $n$-orbifold such that $\Sigma_0=\varnothing$. Then there exists at least one closed geodesic of positive length in $\mathcal{Q}$.  
\end{cor}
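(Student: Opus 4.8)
The plan is to deduce this immediately from Theorem \ref{thm:closed_component} by producing a closed connected component of \emph{positive} singular dimension. First I would set
$k_0 := \min\{k : \Sigma_k \neq \varnothing\}$; the minimum is over a nonempty set because $Q = \bigsqcup_{k=0}^{n}\Sigma_k$ and $Q\neq\varnothing$, and since $\Sigma_0=\varnothing$ by hypothesis we get $k_0\ge 1$. In the special case $\Sigma=\varnothing$ (equivalently, $\mathcal{Q}$ is a compact manifold), this simply gives $k_0=n$ with $\Sigma_{k_0}=Q_{\rm reg}=Q$; in general $k_0\le n-1$.

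Next I would choose any connected component $S\subseteq\Sigma_{k_0}$, which exists since $\Sigma_{k_0}\neq\varnothing$, and show that $S$ is closed, i.e. $\fr(S)=\varnothing$. By Remark \ref{rem:strata} (and the discussion following Proposition \ref{prop:finitely_many_components}), every point of $\fr(S)$ lies in some singular stratum $\Sigma_\ell$ with $\ell<k_0$: a frontier point has isotropy group strictly containing a conjugate of $\Gamma_S$, hence strictly smaller singular dimension than the points of $S$. But $\Sigma_\ell=\varnothing$ for all $\ell<k_0$ by minimality of $k_0$, so $\fr(S)=\varnothing$. Thus $S$ is a closed connected component of singular dimension $k_0\ge 1$ (in the manifold case, $S=Q$ with $k_0=n\ge 1$).

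Finally, applying Theorem \ref{thm:closed_component} with this component $S$ and $k=k_0>0$ yields a closed geodesic of positive length on $\mathcal{Q}$, completing the proof. There is no real obstacle here beyond bookkeeping; the one point that needs care is the claim that the frontier of the minimal nonempty singular stratum is empty, which is precisely where the structure of the stratification by singular dimension is used — namely that frontier points of a stratum carry strictly larger isotropy and therefore strictly smaller singular dimension.
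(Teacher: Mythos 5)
Your proof is correct and takes essentially the same approach as the paper: identify the minimal nonempty singular stratum $\Sigma_{k_0}$, note $k_0>0$ since $\Sigma_0=\varnothing$, argue that any component $S\subseteq\Sigma_{k_0}$ has empty frontier because frontier points would have strictly smaller singular dimension, and then invoke Theorem \ref{thm:closed_component}.
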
 

\begin{proof}
Let $k>0$ be the smallest singular dimension such that $\Sigma_k\neq\varnothing$ and let $S$ be a connected component of $\Sigma_k$. The points in ${\rm fr}(S)$ have singular dimension $< k$, and since $\Sigma_\ell=\varnothing$ for all $0\le\ell<k$, the frontier ${\rm fr}(S)$ is empty. Thus $S$ is closed and the conclusion follows from Theorem \ref{thm:closed_component}.
\end{proof}

A particularly interesting situation is when $\Sigma_1\neq\varnothing$. In this case, again by Proposition \ref{prop:strata}, constant speed parametrizations of the connected components in $\Sigma_1$ define orbifold geodesic paths in $\mathcal{Q}$ and, as we show in Theorem \ref{thm:singular_1}, when $\mathcal{Q}$ is compact, $\cl(\Sigma_1)$ contains a totally geodesic compact $1$-orbifold without boundary which gives rise to a closed geodesic of positive length on the orbifold $\mathcal{Q}$. 

Recall that, up to diffeomorphism between orbifolds, the only compact connected one-dimensional effective orbifolds without boundary are the circle $\mathbb{S}^1$ (with trivial orbifold structure) and the orbifold $\mathcal{I}$, with underlying space the interval $[0,1]$ and singular points 0 and 1, each having isotropy $\Z_2$, the cyclic group of order two. Note that the circle $\mathbb{S}^1$ is an orbifold cover of the orbifold $\mathcal{I}$, and that both these one-dimensional orbifolds contain closed geodesics of positive length. 

Clearly, if $S$ is a closed component of $\Sigma_1$, then $S\simeq\mathbb{S}^1$ and any constant speed parametrization of $S$ gives a closed geodesic of positive length in $\mathcal{Q}$ (this also follows from Theorem \ref{thm:closed_component}). On the other hand, if $S$ is an open connected component of $\Sigma_1$, then $S\simeq(0,1)$ and $\cl(S)\simeq[0,1]$, but this time $\cl(S)$ might not inherit from $\mathcal{Q}$ a non-effective orbifold structure $\mathcal{S}$ so that $\mathcal{S}_{\rm eff}$ is diffeomorphic to the nontrivial compact orbifold $\mathcal{I}$ (for instance, this happens when the isotropy group of one of the frontier points of $S$ has odd order). We notice, however, that in this case, the component $S$ can be prolongated past its frontier to a possibly different component of $\Sigma_1$. We discuss this aspect in the following lemma. 

\begin{lemma}\label{lem:frontier_0}
Suppose $\mathcal{Q}$ is a connected Riemannian $n$-orbifold such that $\Sigma_1\neq\varnothing$. Assume $S\subseteq\Sigma_1$ is an open connected component and let $x\in\fr(S)$. Then either $S$ terminates at $x$ or $S$ extends smoothly past $x$ to a possibly different connected component in $\Sigma_1$.
\end{lemma}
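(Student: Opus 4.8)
The plan is to pass to a fundamental chart at $x$, where the germ of $S$ at $x$ is realized as the projection of a connected component of a one-dimensional fixed-point set, and then to use Lemma \ref{lemma:fixed_point_set} to push this component, as a geodesic, across the fibre over $x$. First I note that since $x\in\fr(S)$ its singular dimension is strictly less than $1$, hence $x\in\Sigma_0$. Choose a fundamental chart $(\widetilde U_x,\Gamma_x,\varphi_x)$ at $x$, so $\varphi_x^{-1}(x)=\{\tilde x\}$. Because $\varphi_x$ is proper and $x$ has the unique preimage $\tilde x$, any sequence in $S\cap U_x$ converging to $x$ has lifts in $\varphi_x^{-1}(S\cap U_x)$ converging to $\tilde x$; after a subsequence these lie in one connected component $\widetilde S_x^\circ$ of $\varphi_x^{-1}(S\cap U_x)$, so $\tilde x\in\cl(\widetilde S_x^\circ)$. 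Let $\Gamma\le\Gamma_x$ be the isotropy along $\widetilde S_x^\circ$; it is constant along this component, $\Gamma\cong\Gamma_S\neq\{1\}$, and $\dim{\rm Fix}(\Gamma)=1$ at the points of $\widetilde S_x^\circ$.

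Next I would take $F$ to be the connected component of ${\rm Fix}(\Gamma)\subseteq\widetilde U_x$ containing $\widetilde S_x^\circ$. By Lemma \ref{lemma:fixed_point_set}, $F$ is a boundaryless totally geodesic submanifold, and since it contains the $1$-dimensional $\widetilde S_x^\circ$ it is a geodesic line or circle. As ${\rm Fix}(\Gamma)$ is closed and $F$ is one of its components, $F$ is closed in $\widetilde U_x$, so $\tilde x\in\cl(\widetilde S_x^\circ)\subseteq F$, and being boundaryless $F$ meets $\tilde x$ as an interior point. I then pick a geodesic parametrization $\tilde\sigma\colon(-\delta,\delta)\to F$ with $\tilde\sigma(0)=\tilde x$, an embedding for small $\delta$, normalised so that $\tilde\sigma((0,\delta))\subseteq\widetilde S_x^\circ$. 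Then $\varphi_x\circ\tilde\sigma$ is an orbifold geodesic in $\mathcal Q$ through $x$ whose restriction to $(0,\delta)$ traces a germ of $S$.

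The substantive step is to control the far side $\tilde\sigma((-\delta,0))$. All its points have isotropy containing $\Gamma$, and I claim that after shrinking $\delta$ the isotropy is exactly $\Gamma$ there. Indeed, a point of $F$ with isotropy $\Gamma'\supsetneq\Gamma$ of singular dimension $\ge 1$ would lie in a $1$-dimensional component of ${\rm Fix}(\Gamma')$, which near that point is contained in ${\rm Fix}(\Gamma)$, hence coincides with $F$; being closed and open in $F$ it would equal $F$, forcing the isotropy along $\widetilde S_x^\circ\subseteq F$ to contain $\Gamma'$ — a contradiction. So every exceptional point of $F$ has singular dimension $0$, is isolated in its own fixed-point set, and cannot accumulate at $\tilde x$ (else a component of the relevant ${\rm Fix}(\Gamma')$ through $\tilde x$ would meet $F$ in a set accumulating at $\tilde x$, again forcing it to be $F$). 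Hence $\tilde\sigma((-\delta,0))$ has isotropy exactly $\Gamma$, so $\varphi_x(\tilde\sigma((-\delta,0)))$ is a connected subset of $\Sigma_1\cap U_x$ and lies in a single component $S'$ of $\Sigma_1$. The dichotomy is then read off the normalizer: if some $\gamma\in N_{\Gamma_x}(\Gamma)$ fixes $\tilde x$ with $D_{\tilde x}\gamma=-{\rm id}$ on the line $T_{\tilde x}F$, then $\gamma$ preserves and reverses $F$, so $\gamma(\tilde\sigma((-\delta,0)))=\tilde\sigma((0,\delta))$, the geodesic doubles back into the germ of $S$, and $\cl(S)$ carries near $x$ the local structure of an endpoint of the orbifold interval $\mathcal I$ — this is the case "$S$ terminates at $x$". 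Otherwise $\varphi_x|_F$ is injective near $\tilde x$ (any identification of generic points of $F$ near $\tilde x$ is effected by an element of $N_{\Gamma_x}(\Gamma)$ fixing $\tilde x$, i.e. by such a reflection), so $\varphi_x\circ\tilde\sigma$ is an embedded geodesic arc through $x$ with a germ of $S$ on one side and a germ of $S'$ on the other, giving the smooth extension to $S'$ (possibly $S'=S$).

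I expect the main obstacle to be exactly this third step: showing that the geodesic genuinely re-enters the $1$-dimensional stratum on the far side of $\tilde x$ and does so inside a single connected component — that is, pinning down the orbit-type structure of $\widetilde U_x$ along $F$ near $\tilde x$ — together with the final bookkeeping, which turns entirely on how $N_{\Gamma_x}(\Gamma)/\Gamma$ acts on the germ of $F$ at $\tilde x$ (note that $x\in\Sigma_0$ already forces $\Gamma_x$ not to fix $T_{\tilde x}F$ pointwise, so a reflection of $T_{\tilde x}F$ always exists in $\Gamma_x$; the two alternatives of the lemma correspond to whether or not one can be chosen inside $N_{\Gamma_x}(\Gamma)$).
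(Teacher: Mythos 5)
Your proof is correct and follows essentially the same approach as the paper: pass to a fundamental chart at $x$, realize a germ of $S$ at $\tilde x$ via a one-dimensional fixed-point set, extend it as a geodesic through $\tilde x$ using Lemma \ref{lemma:fixed_point_set}, and split into the two cases according to whether $\Gamma_x$ contains an element reversing the tangent direction $\v_0$. You are more careful than the paper at the step of showing the geodesic re-enters $\Sigma_1$ on the far side of $\tilde x$ (which the paper dismisses with ``Clearly''), and that detail is a genuine improvement; a shorter route to the same conclusion is that in a fundamental chart at a point of $\Sigma_0$ all other points have singular dimension at least one (Remark \ref{rem:strata}), while on $F$ it is at most one, hence exactly one.

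One caveat: the final parenthetical contains two incorrect assertions. From $x\in\Sigma_0$ one only deduces that $\Gamma_x$ does not fix $T_{\tilde x}F$ pointwise, i.e.\ some $\gamma\in\Gamma_x$ moves $\v_0$; it does not follow that some $\gamma$ maps $\v_0\mapsto-\v_0$, since $\gamma$ can rotate $\v_0$ off the line (as happens for the components $S'$, $S''$ in Example \ref{eg_closed_geodesic_1}, where $\Gamma_x\cong D_3$ and no element reverses the $x_1$-axis). Moreover, when such a reflecting $\gamma$ does exist it automatically lies in $N_{\Gamma_x}(\Gamma)$, because under linearization $\Gamma=(\Gamma_x)_{\v_0}=(\Gamma_x)_{-\v_0}$, so $\gamma\Gamma\gamma^{-1}=(\Gamma_x)_{\gamma\v_0}=(\Gamma_x)_{-\v_0}=\Gamma$. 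Hence the proposed criterion of whether a reflection ``can be chosen inside $N_{\Gamma_x}(\Gamma)$'' is vacuous; the actual dichotomy is simply the existence or non-existence of $\gamma\in\Gamma_x$ with $\gamma\v_0=-\v_0$, which is what your proof proper uses and what the paper uses. This parenthetical is not load-bearing for your argument, so the proof itself stands, but the remark as stated would mislead a reader.
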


\begin{proof} Let $S$ be an open connected component of $\Sigma_1$. Let $c\colon (0,1)\to S$ be a smooth constant speed parametrization of $S$. The frontier $\fr(S)\subset \Sigma_0$ is given by the limit points $c(0)=\lim\limits_{t\to 0^+}c(t)$ and $c(1)=\lim\limits_{t\to 1^-}c(t)$. 

Let $x=c(0)$ and let $(\widetilde{U}_x,\Gamma_x,\varphi_x)$ be a fundamental chart at $x$.  Let $\tilde{x}=\varphi_x^{-1}(x)$ be the lift of $x$ in $\widetilde{U}_x$. Let $\tilde{c}_0\colon [0,\varepsilon_0)\to \widetilde{U}_x$ be a geodesic path such that $\tilde{c}_0(0)=\tilde{x}$ and $\varphi_x\circ\tilde{c}_0=c|_{[0,\varepsilon_0)}$ for some $0<\varepsilon_0<1$. Since the image of $c|_{(0,\varepsilon_0)}$ is contained in the stratum $\Sigma_1$, there exists a proper subgroup $\Gamma_0\le \Gamma_x$ which fixes $\tilde{c}_0(0,\varepsilon_0)$ pointwise. Then $\Gamma_0$ fixes the $1$-dimensional vector subspace $V_0\subset T_{\tilde{x}}\widetilde{U}_x$ spanned by $\v_0=\dot{\tilde{c}}_0(0)$, and the geodesic $\tilde{c}_0\colon [0,\varepsilon_0)\to \widetilde{U}_x$ can be extended in $\widetilde{U}_x$ through $\tilde{x}$ to a $\Gamma_0$-invariant geodesic $\tilde{c}_0\colon (-\varepsilon_0,\varepsilon_0)\to \widetilde{U}_x$.  

We distinguish two possible situations. The first one is when $\Gamma_x$ contains an element $\gamma$ such that $\gamma\v_0=-\v_0$. Then 
$$\varphi_x(\tilde{c}_0(t))=\varphi_x(\tilde{c}_0(-t))=c(t) \mbox{ for all } 0\le t\le \varepsilon_0.$$ 
In this case we say that the component $S=c(0,1)\subset\Sigma_1$ terminates at $x=c(0)$, or that $x$ is an \emph{end of $S$}. The second possible situation is when there are no elements $\gamma\in\Gamma_x$ that satisfy $\gamma\v_0=-\v_0$. Then 
$$\varphi_x(\tilde{c}_0(t))\neq\varphi_x(\tilde{c}_0(-t)) \mbox{ for all } 0<t<\varepsilon_0.$$ 
In this case $S$ does not end at $x$, and we can prolongate $S$ to the left to a continuous path $c\colon (-\varepsilon_0,1)\to Q$ by letting $c|_{(-\varepsilon_0,0)}=\varphi_x\circ \tilde{c}_0|_{(-\varepsilon_0,0)}$. Clearly $c(-\varepsilon_0,0)$ belongs to a component of $\Sigma_1$ and the isomorphism class of the isotropy groups along this component is the same as the one along $S$. Moreover, since $c(-\varepsilon_0,\varepsilon_0)$ is the projection of the smooth geodesic $\tilde{c}_0(-\varepsilon_0,\varepsilon_0)$ in $\widetilde{U}_x$, the prolongation $c(-\varepsilon_0,1)$ of $S$ is the underlying path of a smooth orbifold geodesic passing through $x\in \Sigma_0$. 

Similarly, at $c(1)\in \Sigma_0$, we have one of the two possible situations: either $S$ ends at $c(1)$, or $S$ can be prolongated to the right to a continuous path $c(0,1+\varepsilon_1)$ with $c(1,1+\varepsilon_1)\subset \Sigma_1$, for some $\varepsilon_1>0$. 
\end{proof}

\begin{ex}\label{eg_closed_geodesic_1}
 Suppose the singular locus of a $3$-orbifold $\mathcal{Q}$ is as in Figure \ref{locus}, where $\Sigma_1$ consists of three open connected components $S, S'\mbox{ and } S''$, and $\Sigma_0=\{x,y\}$. 
\begin{figure}[h]
\begin{center}
\leavevmode\hbox{}
\includegraphics[width=5.5cm]{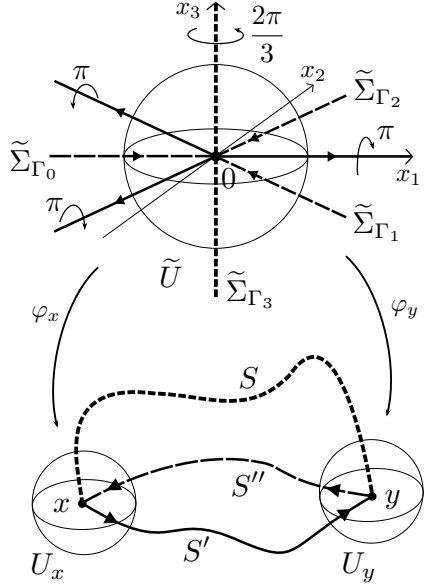}
\caption[Closed geodesics in Example \ref{eg_closed_geodesic_1}]{$S\cup\{x,y\}\simeq [0,1]$ and $S'\cup S''\cup\{x,y\}\simeq\mathbb{S}^1$ are totally geodesic 1-orbifolds contained in $\cl(\Sigma_1)$.}
\label{locus}
\end{center}
\end{figure}
Suppose that a fundamental neighbourhood $U_x$ at $x$ is uniformized by an orbifold chart $(\R^3,\Gamma,\varphi_x)$, where  $\Gamma$ is the subgroup of $SO(3)$ generated by $\gamma$, the rotation of angle $2\pi/3$ around the axis $0x_3$, and $\delta$, the rotation of angle $\pi$ around the axis $0x_1$. The group $\Gamma=\langle \gamma,\delta \mid \gamma^3=\delta^2=(\gamma\delta)^2=1\rangle$ is isomorphic to $D_3$, the dihedral group with six elements, and has four proper subgroups: one subgroup of order three $\Gamma_3=\langle \gamma\rangle$, and three subgroups of order two $\Gamma_0=\langle \delta\rangle$, $\Gamma_1= \langle \gamma\delta\rangle$ and $\Gamma_2= \langle \gamma^{-1}\delta\rangle$, which are conjugate to each other: $\Gamma_0=\gamma\Gamma_1\gamma^{-1}=\gamma^{-1}\Gamma_2\gamma$.

For each $i=0,1,2$ and 3, let $\widetilde{\Sigma}_{\Gamma_i}$ be the one dimensional subspace fixed by the subgroup $\Gamma_i$. Since each element of order two in $\Gamma$ leaves the subspace $\widetilde{\Sigma}_{\Gamma_3}$ invariant and maps any vector $v\in \widetilde{\Sigma}_{\Gamma_3}$ to $-v$, the subspace $\widetilde{\Sigma}_{\Gamma_3}$ projects via $\varphi_x$ to $\{x\}\cup(S\cap U_x)$ and the component $S$ {\it terminates} at $x$, or equivalently, $x$ is an \emph{end} of $S$. On the other hand, $\widetilde{\Sigma}_{\Gamma_0}=\gamma^{-1}\widetilde{\Sigma}_{\Gamma_1} = \gamma\widetilde{\Sigma}_{\Gamma_2}$, which implies that $\varphi_x(\widetilde{\Sigma}_{\Gamma_0})=\varphi_x(\widetilde{\Sigma}_{\Gamma_1})=\varphi_x(\widetilde{\Sigma}_{\Gamma_2})=((S'\cup S'')\cap U_x)\cup\{x\}$. In this case, the point $x\in \Sigma_0$ is not an end of either $S'$ or $S''$, and the component $S'$ \emph{extends} $S''$ past $x$ and vice-versa. 

Suppose that a fundamental neighbourhood $U_y$ at $y$ is uniformized by an orbifold chart $(\R^3,\Gamma, \varphi_y)$ such that, as before, $y$ is an end of $S$ and the components $S'$ and $S''$ are each other's extension through $y$. 

The closure $\cl(S)=S\cup\{x,y\}\simeq[0,1]$ has the structure of a compact one-dimensional non-effective orbifold $\mathcal{S}$ such that, in the associated reduced orbifold structure $\mathcal{S}_{\rm eff}$, any point in $S$ is a regular point and $x,y\in\fr(S)$ are singular points having isotropy $\Gamma_x\simeq\Gamma_y\simeq\Gamma/\Gamma_3\simeq\Z_2$. Hence $\mathcal{S}_{\rm eff}$ is diffeomorphic to the orbifold $\mathcal{I}$, introduced before Lemma \ref{lem:frontier_0}. An example of closed geodesic contained in $\cl(S)$ is given by a constant speed parametrization of $\cl(S)$ that starts at $x$, goes along $S$ toward $y$, is reflected by the element of order two in $\Gamma_y$, and travels back along $S$ toward $x$ where is reflected by the element of order two in $\Gamma_x$. 

We distinguish another class of closed geodesics in the closure $\cl(\Sigma_1)$: the union $S'\cup S''\cup\{x,y\}$ is a totally geodesic embedded $\mathbb{S}^1$, and any constant speed parametrization of a closed path going around $S' \cup S''\cup \{x,y\}$ gives rise to a closed geodesic of positive length in  $\mathcal{Q}$. 
\end{ex}

\begin{figure}[h]
\begin{center}
\leavevmode\hbox{}
\includegraphics[width=7.5cm]{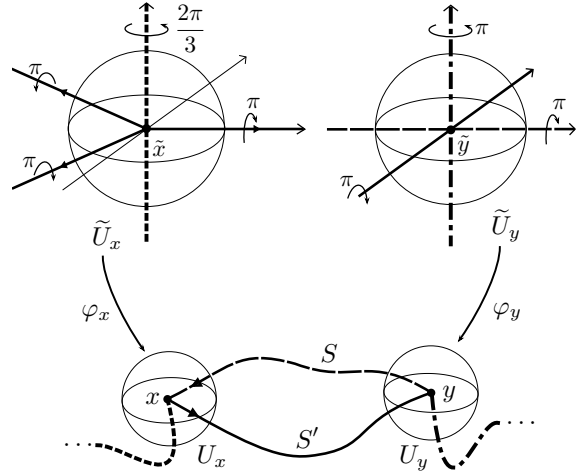}
\caption[Closed geodesics in Example \ref{eg_closed_geodesic_12}]{  $S\cup S'\cup\{x,y\}\simeq[0,1]$ is a totally geodesic 1-orbifold in $\cl(\Sigma_1)$.}
\label{locus3}
\end{center}
\end{figure}

\begin{ex}\label{eg_closed_geodesic_12} In Figure \ref{locus3}, assume that $x$ is a singular point in a $3$-orbifold $\mathcal{Q}$ as in Example \ref{eg_closed_geodesic_1}, and that the isotropy group at $y$ is the Klein group of four elements, $\Z_2\times\Z_2$, which acts on $\R^3$ by rotations of angle $\pi$ around the three orthogonal axes. As in the previous example, $x\in \fr(S)\cap\fr(S')$ is not an end for either $S$ or $S'$ and the components $S$ and $S'$ extend each other through $x$. However, this time, the point $y$ is an end for both $S$ and $S'$. The union $S\cup S'\cup \{x,y\}$ has the structure of a $1$-orbifold with underlying space homeomorphic to $[0,1]$, and, as before, $S\cup S'\cup \{x,y\}$ gives rise to a closed geodesic of positive length in $\mathcal{Q}$.
\end{ex}

In general, the frontier of an open connected component $S\subseteq\Sigma_1$ consist of either one point or two distinct points in $\Sigma_0$, and examples of the latter case are presented in \ref{eg_closed_geodesic_1} and \ref{eg_closed_geodesic_12}. We next consider the case when the frontier $\fr(S)$ consist of a single point $x\in\Sigma_0$.

Using the notation in the proof of Lemma \ref{lem:frontier_0}, we assume that $c(1)=c(0)=x$ and let $\tilde{c}_0\colon [0,\varepsilon_0)\to \widetilde{U}_x$ and $\tilde{c}_1\colon (1-\varepsilon_1 ,1]\to \widetilde{U}_x$ be geodesic paths in $\widetilde{U}_x$ such that  $\varphi_x\circ\tilde{c}_0=c|_{[0,\varepsilon_0)}$ and $\varphi_x\circ\tilde{c}_1=c|_{(1-\varepsilon_1,1]}$. Assume $\widetilde{U}_x$ is small enough so that $0<\varepsilon_0<1-\varepsilon_1<1$.   Let $\Gamma_0$ and $\Gamma_1$ be the subgroups of $\Gamma_x$ that fix the vector subspaces $V_0$ and $V_1$ in $T_{\tilde{x}}\widetilde{U}_x$ spanned by $\v_0=\dot{\tilde{c}}_0(0)$ and $\v_1=\dot{\tilde{c}}_1(1)$, respectively.  Although the groups $\Gamma_0$ and $\Gamma_1$ are isomorphic, they need not be conjugate in $\Gamma_x$. 

If $\Gamma_0$ and $\Gamma_1$ are not conjugate in $\Gamma_x$, then, as in Lemma \ref{lem:frontier_0}, the component $S$ either terminates at $x$ from both ends; or $S$ can be prolongated past $x$ to different components of $\Sigma_1$ at either one end or both ends. 

Assume now that $\Gamma_0$ and $\Gamma_1$ are conjugate in $\Gamma_x$. If $\gamma\in\Gamma_x$ is such that $\Gamma_0=\gamma\Gamma_1\gamma^{-1}$, then $\gamma.V_1=V_0$ and either $\gamma\v_1=\v_0$ or $\gamma\v_1=-\v_0$. We show that the latter is not possible. Indeed, if we suppose that $\gamma\v_1=-\v_0$, then $\gamma\tilde{c}_1(1-t)=\tilde{c}_0(t)$ and $\gamma\dot{\tilde{c}}_1(1-t)=-\dot{\tilde{c}}_0(t)$ for all $0\le t\le\min(\varepsilon_0,\varepsilon_1)$. Composing with $\varphi_x$, this yields $$ c(1-t)=c(t) \mbox{ and } \dot{c}(1-t)=-\dot{c}(t) \mbox{ for all } 0\le t\le\min(\varepsilon_0,\varepsilon_1).$$ Covering $S$ with fundamental neighbourhoods at points $c(t)$ and using the corresponding orbifold change of charts, the above condition implies that $c(1-t)=c(t)$ and $\dot{c}(1-t)=-\dot{c}(t)$ for all $t\in(0,1)$. But then $\dot{c}(\frac{1}{2})=-\dot{c}(\frac{1}{2})$, which contradicts the fact that $c\colon (0,1)\to S$ is a non-zero constant speed parametrization for $S\subseteq\Sigma_1$. 

Thus, if $\Gamma_0=\gamma\Gamma_1\gamma^{-1}$, then $\gamma\v_1=\v_0$ and this implies that the component $S$ extends past $x$ into itself. In other words, the closure $\cl(S)\simeq\mathbb{S}^1$ has the structure of a totally geodesic embedded circle, and, in this case, $c\colon [0,1]\to Q$ is the underlying path of a closed geodesic in $\mathcal{Q}$ passing through $x$. 

In conclusion, constant speed parametrizations of an open connected component $S\subseteq\Sigma_1$ can be extended smoothly past the frontier points of $S$, and these extended paths are underlying paths of smooth orbifold geodesics in $\mathcal{Q}$.  In our next result we use these extended geodesics to show the existence of closed geodesics  when $\mathcal{Q}$ is compact, these extended paths also give rise to closed geodesics of positive length in $\mathcal{Q}$.
 
 \begin{theorem}\label{thm:singular_1}
Suppose $\mathcal{Q}$ is a compact effective Riemannian orbifold such that $\Sigma_1\neq\varnothing$. Then there exists at least one closed geodesic of positive length in $\mathcal{Q}$. 
\end{theorem}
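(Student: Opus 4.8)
The plan is to produce the closed geodesic inside the closure of the one-dimensional stratum, by exhibiting there a compact totally geodesic $1$-suborbifold diffeomorphic to one of the two models $\mathbb{S}^1$ or $\mathcal{I}$, each of which carries a closed geodesic of positive length. First, if $\Sigma_1$ has a \emph{closed} connected component $S$, then $S\simeq\mathbb{S}^1$ is a compact totally geodesic submanifold of $\mathcal{Q}$ by Proposition \ref{prop:strata}, and any constant-speed parametrization of $S$ is the desired closed geodesic (this is the case already covered by Theorem \ref{thm:closed_component}). So I may assume every component of $\Sigma_1$ is open. By Proposition \ref{prop:finitely_many_components} there are only finitely many such components, and $\Sigma_0$ is finite; moreover, since $\mathcal{Q}$ is compact, $\cl(S)$ is compact and hence each open component $S\subseteq\Sigma_1$ has finite positive length.

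Next I would package the prolongation procedure of Lemma \ref{lem:frontier_0} (together with the case analysis in the paragraphs preceding this theorem) as a discrete dynamical system on a finite set. Fix a constant-speed orientation on each open component of $\Sigma_1$, and let $H$ be the finite set of \emph{directed traversals} $(S,\pm)$ of these components. A directed traversal, parametrized by arc length, runs from a neighbourhood of one frontier point of $S$ to a neighbourhood of the other; upon arriving at its terminal frontier point $x\in\Sigma_0$ it continues, in a uniquely determined way, as the underlying path of a smooth orbifold geodesic through $x$ — either reflecting back along $S$ (when $\Gamma_x$ contains an element reversing the relevant fixed tangent direction) or passing through $x$ into a uniquely determined other component of $\Sigma_1$. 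This is exactly the content of Lemma \ref{lem:frontier_0} and the discussion after it, and it defines a map $\sigma\colon H\to H$. Because geodesics are reversible, $\sigma$ is a bijection: it factors as the ``ending half-component'' bijection from $H$ to the set of half-components of $\Sigma_1$ at points of $\Sigma_0$, followed by the routing at each frontier point $x$ (which on the half-components at $x$ is the involution induced by $\tilde{\v}\mapsto-\tilde{\v}$ on the fixed tangent directions), followed by the inverse of the ``starting half-component'' bijection.

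Then I would close up an orbit by pigeonhole. Since $\sigma$ is a bijection of the finite set $H$, any $s_0\in H$ lies on a $\sigma$-cycle $s_0\to s_1\to\cdots\to s_{m-1}\to s_0$, say with $s_i$ a traversal of the component $S_i\subseteq\Sigma_1$. Concatenating the closures $\cl(S_0),\cl(S_1),\ldots,\cl(S_{m-1})$, joined at the intermediate frontier points by the smooth orbifold geodesics supplied by the prolongation, gives a closed path $c$ in $Q$. By construction $c$ underlies a closed orbifold geodesic: it is a geodesic of the totally geodesic manifold $\Sigma_1$ on the interior of each $\cl(S_i)$ by Proposition \ref{prop:strata}, it is smooth through each frontier point by Lemma \ref{lem:frontier_0}, and the cyclicity $s_0\to\cdots\to s_0$ forces the initial and terminal velocity data to agree through the appropriate change of charts at the closure point. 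Its length equals $\sum_{i=0}^{m-1}\ell(S_i)\ge\ell(S_0)>0$, so $c$ is a closed geodesic of positive length on $\mathcal{Q}$. (Equivalently, the image of $c$ with its inherited orbifold structure is a compact totally geodesic $1$-suborbifold of $\mathcal{Q}$ whose effective reduction is diffeomorphic to $\mathbb{S}^1$ or to $\mathcal{I}$.)

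The main obstacle is verifying that the prolongation is genuinely deterministic and reversible — that is, that $\sigma$ is a well-defined bijection and that every routing step really does yield a smooth orbifold geodesic through the relevant point of $\Sigma_0$; but this is essentially discharged by Lemma \ref{lem:frontier_0} and the discussion immediately before the theorem, after which the rest is a finiteness argument on $H$. A minor point to bear in mind is that the resulting closed geodesic need not be embedded — it may traverse a component of $\Sigma_1$ in both directions, as in the orbifold $\mathcal{I}$, or run around a ``noose'' — but this affects neither the positivity of its length nor the fact that $c$ is a bona fide closed orbifold geodesic.
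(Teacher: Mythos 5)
Your proposal is correct and, at bottom, takes the same route as the paper: reduce to open components, invoke Proposition~\ref{prop:finitely_many_components} for finiteness, use the prolongation of Lemma~\ref{lem:frontier_0} (together with the discussion preceding the theorem) to extend past frontier points, and close up after finitely many steps. What you do differently is package the prolongation as a bijection $\sigma$ of the finite set $H$ of directed traversals and extract a closed geodesic from a $\sigma$-cycle, rather than, as the paper does, taking a single component, prolongating it \emph{maximally} in both directions, and then splitting into the two cases ``closes up into a circle'' or ``reflects at both ends, so double the path.'' Your formulation is cleaner in that the existence and termination of the prolongation become an automatic pigeonhole consequence of $\sigma$ being a bijection on a finite set, and it handles both cases of the paper uniformly (a cycle with no reflecting step is the paper's circle case; one with reflecting steps is the paper's doubled-interval case — by the reversibility relation $\sigma^{-1}=\mathrm{rev}\circ\sigma\circ\mathrm{rev}$ a cycle has either zero or exactly two reflecting steps, recovering the $\mathbb{S}^1$/$\mathcal{I}$ dichotomy you mention). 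The one place you are lighter on detail than the paper is at the very end, where you assert that ``the cyclicity forces the initial and terminal velocity data to agree''; the paper instead writes out the geodesic-path data explicitly, in particular exhibiting the representative
\[
(\tilde{c}_a|_{[a,t_1]},h_1,\ldots,h_k,\tilde{c}_b|_{[t_k,b]},\gamma_b,\tilde{c}^-_b|_{[t_k,b]},h_k^{-1},\ldots,h_1^{-1},\tilde{c}^-_a|_{[a,t_1]},\gamma_a)
\]
for the reflecting case and checking that the final change of chart $\gamma_a$ closes the velocity vector. Your assertion is in fact correct (it follows from the unique-lifting statement of Remark~\ref{rem:effective}, since at the closure point both the terminal lift and the initial lift project to the same germ of $c$, hence differ by a unique element of $\Gamma_x$ carrying velocity to velocity), but you should say this rather than leave it as ``by construction.''
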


\begin{proof}
Let $S\neq\varnothing$ be a connected component of $\Sigma_1$. By Proposition \ref{prop:strata}, $S$ has the structure of a $1$-dimensional manifold without boundary which is totally geodesic in $\mathcal{Q}$. If $S$ is closed, then a constant speed parametrization of $S$ gives a closed geodesic on $\mathcal{Q}$, and this geodesic is contained entirely in $\Sigma_1\subseteq\cl(\Sigma_1)$.

Assume $S$ is open. By Proposition \ref{prop:finitely_many_components} there are only finitely many components in $\Sigma_1$, and by Lemma \ref{lem:frontier_0}, $S$ can be prolongated within $\cl(\Sigma_1)$ to a continuous path $c\colon [a,b]\to\Sigma_1\cup\Sigma_0$ with $a\le0$ and $1\le b$, such that $c([a,b])$ is maximal in the following sense. Either the points $c(a)$ and $c(b)$ are end points for the components $c(a,a+\varepsilon)$ and $c(b-\varepsilon,b)$, for some $\varepsilon>0$; or $c(a)=c(b)$  are not end points, and the component $c(a,a+\varepsilon)$ prolongates to the left into the component $c(b-\varepsilon,b)$. In the latter case, it is clear that $c([a,b])\simeq\mathbb{S}^1$ is the underlying path of a closed geodesic in $\mathcal{Q}$. Assume the former and note that $c(a)$ and $c(b)$ need not be distinct in $Q$ (see Example \ref{eg_closed_geodesic_12}). 
Let $\tilde{c}_a\colon [a,a+\varepsilon)\to \widetilde{U}_{c(a)}$ and $\tilde{c}_b\colon (b-\varepsilon,b]\to\widetilde{U}_{c(b)}$ be local lifts of $c$ to fundamental uniformizing charts at $c(a)$ and $c(b)$, for some $\varepsilon>0$. Because $c(a)$ and $c(b)$ are end points, there exist $\gamma_a\in\Gamma_{c(a)}$ and $\gamma_b\in\Gamma_{c(b)}$ such that $\gamma_a\dot{\tilde{c}}_a(a)=-\dot{\tilde{c}}_a(a)$ and $\gamma_b\dot{\tilde{c}}_b(b)=-\dot{\tilde{c}}_b(b)$. Let $a<t_1<a+\varepsilon<t_2<\ldots<t_{k-1}<b-\varepsilon<t_k<b$ be a subdivision of the interval $[a,b]$, and let $$(\tilde{c}_a|_{[a,t_1]},h_1,\tilde{c}_1,h_2,\ldots,\tilde{c}_{k-1},h_k,\tilde{c}_b|_{[t_k,b]},\gamma_b)$$ be a representative of an orbifold geodesic with underlying path $c([a,b])$, where $\tilde{c}_i\colon [t_i,t_{i+1}]\to\widetilde{U}_{c(t_i)}$ are local lifts of $c|_{[t_i,t_{i+1}]}$ and $h_i$ are change of charts such that $h_{i}(\tilde{c}_{i-1}(t_{i}))=\tilde{c}_{i}(t_i)$ and $h_{i}(\dot{\tilde{c}}_{i-1}(t_{i}))=\dot{\tilde{c}}_{i}(t_i)$. Let $\tilde{c}_i^-$ denote the paths $\tilde{c}_i$ with the reversed orientation. Then $$(\tilde{c}_a|_{[a,t_1]},h_1,\ldots,\tilde{c}_{k-1},h_k,\tilde{c}_b|_{[t_k,b]},\gamma_b,\tilde{c}^-_b|_{[t_k,b]},h_k^{-1},\tilde{c}^-_{k-1},\ldots,h_1^{-1},\tilde{c}^-_a|_{[a,t_1]},\gamma_a)$$ represents a closed orbifold geodesic whose length is twice the length of $c([a,b])$.
\end{proof}

\begin{rem}\label{rem:reduction} Theorems \ref{thm:closed_stratum} and \ref{thm:singular_1} allow us to reduce the problem of existence of closed geodesics of positive length on compact orbifolds to the case of compact orbifolds with only zero dimensional singular locus. \end{rem}

\section{Developable orbifolds}\label{sec:develop_geodesics}
Let $\mathcal{Q}$ be an effective $n$-dimensional compact connected developable Riemannian orbifold. We write $\mathcal{Q}$ as the orbifold quotient $M/\Gamma$, where $M$ is the orbifold universal covering of $\mathcal{Q}$ and $\Gamma=\pi_1^{orb}(\mathcal{Q})$ is the orbifold fundamental group. That is, $M$ is a connected, simply connected complete smooth Riemannian manifold (with the natural Riemannian structure pulled back from $\mathcal{Q}$) and $\Gamma$ is a discrete subgroup of the group $\mbox{Isom}(M)$ acting properly  and cocompactly by isometries on $M$. In short, we will say that $\Gamma$ acts \emph{geometrically} on $M$. Let $Q$ be the underlying topological space of $\mathcal{Q}=M/\Gamma$ and let $\pi\colon M\to Q$ be the natural projection map. 

Recall from \S\ \ref{ssec:paths} that closed geodesics of positive length on $\mathcal{Q}$ are in one-to-one correspondence with equivalence classes of pairs $(\tilde{c},\gamma)$, where $\tilde{c}\colon [0,1]\to M$ is a non-constant geodesic segment in $M$ and $\gamma\in\Gamma$ is an isometry of $M$ such that $\gamma \tilde{c}(1)=\tilde{c}(0) \mbox{ and } \gamma \dot{\tilde{c}}(1)=\dot{\tilde{c}}(0).$ Two pairs $(\tilde{c},\gamma)$ and $(\tilde{c}',\gamma')$ are equivalent if and only if there is an isometry $\delta \in \Gamma$ such that $\tilde{c}' = \delta.\tilde{c}$ and $\gamma' = \delta\gamma\delta^{-1}$.

As shown in the previous sections, the problem of existence of closed geodesics on compact orbifolds is particularly interesting in the case when the singular locus of the orbifold has dimension zero (cf. Remark \ref{rem:reduction}).  In this section, we show that if such an orbifold is developable and has odd dimension, then it admits at least one closed geodesics of positive length. This is a consequence of the following more general result. 

\begin{theorem}\label{thm:order2point}
Let $\mathcal{Q}$ be a compact connected effective developable Riemannian orbifold. Suppose $\mathcal{Q}$ has an isolated singular point whose isotropy group has even order. Then $\mathcal{Q}$ admits a closed geodesic of positive length.
\end{theorem}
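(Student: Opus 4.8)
The plan is to construct the geodesic explicitly in the universal cover, exploiting the fact that an order-two element of an isotropy group at an \emph{isolated} singular point must act as $-\mathrm{id}$ on the tangent space. Write $\mathcal{Q}=M/\Gamma$ with $M$ complete and simply connected and $\Gamma\subseteq\mathrm{Isom}(M)$ acting geometrically, and fix a preimage $\tilde{x}\in M$ of the given isolated singular point $x$, so that $\Gamma_{\tilde{x}}\cong\Gamma_x$ has even order. By Cauchy's theorem pick $g\in\Gamma_{\tilde{x}}$ with $g^{2}=1\neq g$. The first step is to show $D_{\tilde{x}}g=-\mathrm{id}$ on $T_{\tilde{x}}M$: the orthogonal involution $D_{\tilde{x}}g$ otherwise has a nonzero fixed subspace, so by Lemma \ref{lemma:fixed_point_set} the fixed set $\mathrm{Fix}(g)\subseteq M$ is a positive-dimensional totally geodesic submanifold through $\tilde{x}$; its points near $\tilde{x}$ have nontrivial isotropy and hence project to singular points of $\mathcal{Q}$ arbitrarily close to, but distinct from, $x$, contradicting that $x$ is isolated in $\Sigma$.

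I would then dispose of the degenerate case in which $\Gamma$ fixes $\tilde{x}$, i.e. $\Gamma=\Gamma_{\tilde{x}}$. Then $\Gamma$ is finite, so $M$ is a closed manifold (a finite-to-one proper preimage of the compact space $Q$), and by Fet--Lusternik \cite{Lyu:51Var:aa} it carries a nonconstant closed geodesic $\tilde{c}$; the pair $(\tilde{c},1)$ is then a closed orbifold geodesic of positive length on $\mathcal{Q}$.

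In the remaining case there is $h\in\Gamma$ with $\tilde{y}:=h\tilde{x}\neq\tilde{x}$. Let $\sigma\colon[0,\rho]\to M$ be a minimizing geodesic from $\tilde{x}$ to $\tilde{y}$, where $\rho=d(\tilde{x},\tilde{y})>0$, and put $g'=hgh^{-1}\in\Gamma_{\tilde{y}}$; conjugating the first step gives $D_{\tilde{y}}g'=-\mathrm{id}$ on $T_{\tilde{y}}M$. Define $\tilde{c}\colon[0,2\rho]\to M$ by $\tilde{c}|_{[0,\rho]}=\sigma$ and $\tilde{c}(t)=g'\bigl(\sigma(2\rho-t)\bigr)$ for $t\in[\rho,2\rho]$; this travels out along $\sigma$ and then reflects off $\tilde{y}$ via the point-reflection $g'$. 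Because $D_{\tilde{y}}g'=-\mathrm{id}$, the velocity of the second arc at $t=\rho$ equals $-D_{\tilde{y}}g'(\dot\sigma(\rho))=\dot\sigma(\rho)$, so the two geodesic arcs match to first order and $\tilde{c}$ is a smooth geodesic of length $2\rho$. Finally I would check that, with $\gamma:=gg'\in\Gamma$, the pair $(\tilde{c},\gamma)$ is a closed orbifold geodesic: using $g'^{2}=1$ and $g\tilde{x}=\tilde{x}$ one gets $\gamma\tilde{c}(2\rho)=gg'\bigl(g'\tilde{x}\bigr)=g\tilde{x}=\tilde{x}=\tilde{c}(0)$, and since $\dot{\tilde{c}}(2\rho)=-D_{\tilde{x}}g'(\dot\sigma(0))$, the chain rule together with $g'^{2}=1$ and $D_{\tilde{x}}g=-\mathrm{id}$ yields $D\gamma\bigl(\dot{\tilde{c}}(2\rho)\bigr)=\dot\sigma(0)=\dot{\tilde{c}}(0)$. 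Thus $(\tilde{c},\gamma)$ is a closed geodesic of positive length $2\rho$ on $\mathcal{Q}$.

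The one genuinely delicate point is the choice of the deck transformation $\gamma$. The naive choice $\gamma=g'$, merely reflecting the out-and-back path at $\tilde{y}$, does not work: it forces $D\gamma(\dot{\tilde{c}}(2\rho))=-\dot{\tilde{c}}(0)$, so the loop fails to close up smoothly. Composing with the point-reflection $g$ at the other endpoint flips this sign, and it is exactly here, as well as in the $C^{1}$-matching at $\tilde{y}$, that the hypothesis $D_{\tilde{x}}g=-\mathrm{id}$ (that is, the evenness of $|\Gamma_{x}|$ combined with $x$ being isolated) is used in an essential way. I expect this sign bookkeeping, and verifying the smoothness at the reflection point, to be the only places requiring real care; the rest is routine.
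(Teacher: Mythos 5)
Your proof is correct and takes essentially the same route as the paper: pick an involution $g$ at a lift $\tilde{x}$ of the isolated singular point, observe that it must act as $-\mathrm{id}$ on $T_{\tilde{x}}M$ (the paper asserts this more tersely than you do; your appeal to Lemma \ref{lemma:fixed_point_set} is the right justification), dispose of the finite-$\Gamma$ case via Fet--Lusternik, and in the remaining case double a geodesic arc by reflecting at a second orbit point and close it up with a product of two point-reflections. The only cosmetic difference is that the paper reflects at $\tilde{x}$ itself and case-splits on whether $\gamma\delta x = \delta x$ (taking $\lambda=\delta\gamma\delta^{-1}\gamma$ or $\lambda=1$ accordingly), whereas you reflect at $\tilde{y}=h\tilde{x}$ and the single closing element $\gamma=g\,(hgh^{-1})$ handles both cases uniformly; that is a minor streamlining, not a different argument.
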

\begin{proof}

Suppose $\mathcal{Q}=M/\Gamma$ with $M$ simply connected Riemannian manifold and $\Gamma\simeq\pi_1^{orb}(\mathcal{Q})$. Let $x\in M$ be an isolated singular point such that the isotropy group $\Gamma_x$ has even order. Let $\gamma\in\Gamma_x$ be an element of order two. Since $\gamma$ acts as a linear  isometry on $T_xM$ fixing only the origin, $\gamma$ is the inversion in the origin, i.e. $\gamma\v=-\v$ for all $\v\in T_xM$.

If $\Gamma=\Gamma_x$, then $\Gamma$ is finite and $M$ is necessarily compact. Any closed geodesic in $M$ gives a closed orbifold geodesic in $\mathcal{Q}$. 

Assume now that $\Gamma\neq\Gamma_x$ and let $\delta\in\Gamma\smallsetminus\Gamma_x$. Then $\delta x\neq x$ and let $\tilde{c}\colon [0,1]\to M$ be a geodesic segment connecting $x$ to $\delta x$. Let $\tilde{c}^-$ denote the geodesic $\tilde{c}$ with the reversed orientation, i.e. $\tilde{c}^-(t)=\tilde{c}(1-t), t\in[0,1]$. Let $\gamma \tilde{c}$ be the translate of $\tilde{c}$ by the isometry $\gamma$. Then $\gamma \tilde{c}$ is a geodesic connecting $x$ to $\gamma\delta x$. Since $x$ is an isolated singular point, $\gamma$ does not fix $\tilde{c}$ and $$\gamma\dot{\tilde{c}}(0)=-\dot{\tilde{c}}(0)=\dot{\tilde{c}}^-(1).$$ Let $\tilde{c}'\colon [0,1]\to M$ be a constant speed reparametrization of the concatenation $\tilde{c}^-*\gamma \tilde{c}$. Then $\tilde{c}'$ is a smooth geodesic segment from $\tilde{c}'(0)=\delta x$ to $\tilde{c}'(1)=\gamma\delta x$ satisfying $\dot{\tilde{c}}'(0)=\dot{\tilde{c}}^-(0)=-\dot{\tilde{c}}(1)$ and $\dot{\tilde{c}}'(1)=\gamma\dot{\tilde{c}}(1)$. We distinguish two possible situations depending on whether $\gamma$ fixes $\delta x$ or not (Figure \ref{fig:even_geodesics}).

\begin{figure}[h]
\begin{center}
\leavevmode\hbox{}
\includegraphics[width=11cm]{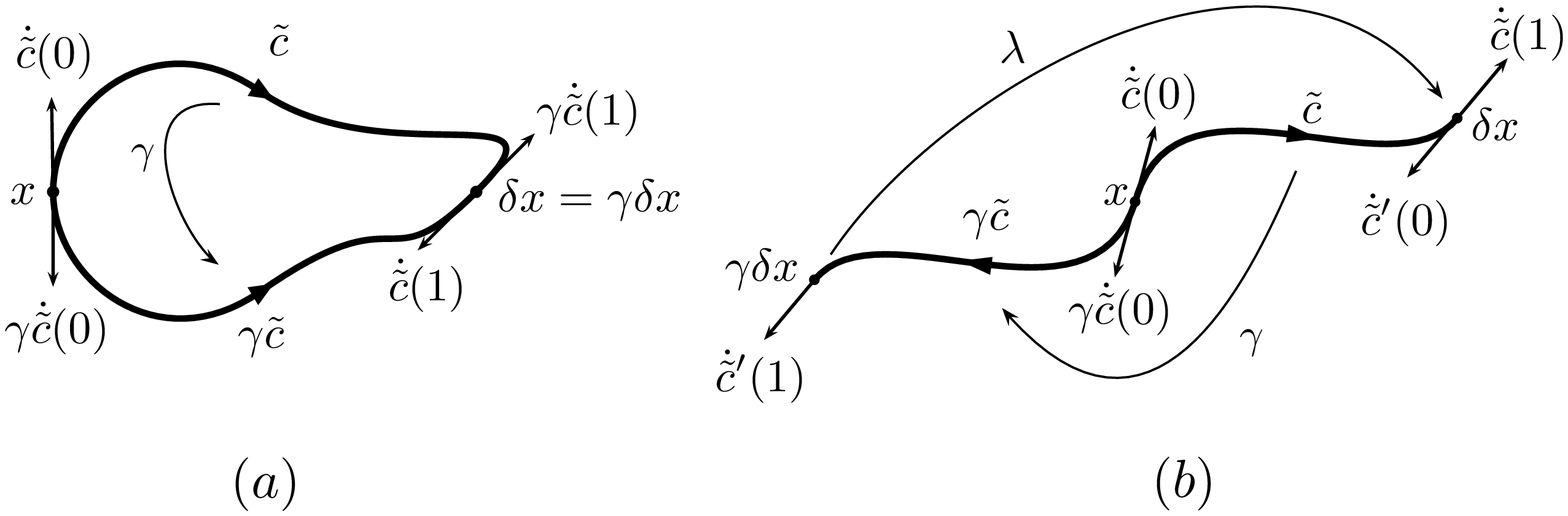}
\caption{Proof of Theorem \ref{thm:order2point}.}
\label{fig:even_geodesics}
\end{center}
\end{figure}

If $\gamma\delta  x=\delta  x$, then $\tilde{c}'(0)=\tilde{c}'(1) = \delta x$ (Figure \ref{fig:even_geodesics} $(a)$). Because $\delta x$ has the same orbit type as $x$, $\delta x$ is an isolated singular point and because $\gamma\in \Gamma_{\delta x}$ has order two, $$\dot{\tilde{c}}'(1)=\gamma\dot{\tilde{c}}(1)=-\dot{\tilde{c}}(1)=\dot{\tilde{c}}^-(0)=\dot{\tilde{c}}(0).$$ This shows that $\tilde{c}'$ is a smooth closed geodesic in $M$, and thus $(\tilde{c}',1)$ defines a closed orbifold geodesic in $\mathcal{Q}$.

Assume now that $\gamma\delta x\neq\delta x$ (Figure \ref{fig:even_geodesics} $(b)$). Let $\lambda=\delta\gamma\delta^{-1}\gamma$. Then $$\lambda\tilde{c}'(1) = \lambda(\gamma\delta x) = \delta\gamma\delta^{-1}\gamma\gamma\delta x = \delta x=\tilde{c}'(0), \mbox{ and }$$  $$\lambda(\dot{\tilde{c}}'(1)) = \delta\gamma\delta^{-1}\gamma\gamma\dot{\tilde{c}}(1) = \delta\gamma\delta^{-1}\dot{\tilde{c}}(1) = -\dot{\tilde{c}}(1) = \dot{\tilde{c}}'(0),$$ since $\delta\gamma\delta^{-1}\in\Gamma_{\delta x}$ and has order two. Thus, the pair $(\tilde{c}',\lambda)$ represents a closed geodesic of positive length in $\mathcal{Q}$. 
\end{proof}

\begin{cor}\label{cor:odd_zero}
Suppose $\mathcal{Q}$ is an odd-dimensional compact developable orbifold which has an isolated singular point. Then $\mathcal{Q}$ admits at least one closed geodesic of positive length. In particular, this is true if the singular locus is zero-dimensional. 
\end{cor}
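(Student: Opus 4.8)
The plan is to deduce this from Theorem~\ref{thm:order2point}; the crux is to show that on an odd-dimensional orbifold the isotropy group of an isolated singular point has order exactly two. So I would begin by fixing a Riemannian structure on $\mathcal{Q}$ (Remark~\ref{rem:riemann_orb}) and taking $\mathcal{Q}$ connected (the given singular point lies in one component, which is again compact and developable). Let $x$ be the isolated singular point, and let $(\widetilde{U}_x,\Gamma_x,\varphi_x)$ be a fundamental chart at $x$ with $\tilde{x}=\varphi_x^{-1}(x)$ fixed by all of $\Gamma_x$. Since $\Sigma\cap U_x=\{x\}$, any point of $\widetilde{U}_x$ other than $\tilde{x}$ with nontrivial isotropy would project to a singular point of $U_x$ distinct from $x$; hence $\Gamma_x$ acts freely on $\widetilde{U}_x\smallsetminus\{\tilde{x}\}$, and in particular $\mathrm{Fix}(\gamma)=\{\tilde{x}\}$ for every $1\ne\gamma\in\Gamma_x$. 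By the argument in the proof of Lemma~\ref{lemma:fixed_point_set}, which identifies $\mathrm{Fix}(\gamma)$ near $\tilde{x}$ with $\exp_{\tilde{x}}$ of the subspace of $T_{\tilde{x}}\widetilde{U}_x$ fixed by $D_{\tilde{x}}\gamma$, this forces $D_{\tilde{x}}\gamma$ to have no nonzero fixed vector, i.e.\ no eigenvalue $1$. As $\Gamma_x$ acts effectively, the isotropy representation $\Gamma_x\hookrightarrow O(n)$ is faithful, and it therefore restricts to a \emph{free} action of $\Gamma_x$ on the unit sphere $\mathbb{S}^{n-1}\subset T_{\tilde{x}}\widetilde{U}_x\cong\R^n$.

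I would then use the parity of $n$ to pin down $|\Gamma_x|$. Since $n$ is odd, $n-1$ is even and $\chi(\mathbb{S}^{n-1})=2$; a free action of the finite group $\Gamma_x$ presents $\mathbb{S}^{n-1}$ as a $|\Gamma_x|$-sheeted covering of the closed manifold $\mathbb{S}^{n-1}/\Gamma_x$, so $2=|\Gamma_x|\cdot\chi(\mathbb{S}^{n-1}/\Gamma_x)$ and hence $|\Gamma_x|\le 2$ (the classical obstruction to free actions on even-dimensional spheres, cf.~\cite{Wol:67Spa:aa}). As $x$ is singular, $\Gamma_x\ne 1$, so $|\Gamma_x|=2$; in particular its order is even (in fact $\Gamma_x$ is generated by $-I$). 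Theorem~\ref{thm:order2point} now applies verbatim and produces a closed geodesic of positive length on $\mathcal{Q}$. For the concluding sentence, if the singular locus of $\mathcal{Q}$ is zero-dimensional then either $\Sigma=\varnothing$, in which case $\mathcal{Q}$ is a compact Riemannian manifold and the Lyusternik--Fet theorem \cite{Lyu:51Var:aa} applies, or $\Sigma=\Sigma_0$ is a nonempty discrete set, every singular point is isolated, and the first part of the corollary applies.

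I do not anticipate a real obstacle here: once the reduction to Theorem~\ref{thm:order2point} is set up, the rest is short. The only step that needs a little care is passing from ``$x$ is isolated in $\Sigma$'' to ``$\Gamma_x$ acts freely on $\mathbb{S}^{n-1}$'', i.e.\ ruling out nontrivial isotropy elements with a nonzero fixed vector by means of Lemma~\ref{lemma:fixed_point_set} (equivalently Proposition~\ref{prop:strata}), after which the Euler-characteristic count forces $|\Gamma_x|=2$ and Theorem~\ref{thm:order2point} closes the argument.
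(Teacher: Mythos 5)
Your argument is correct and follows essentially the same route as the paper: both proofs observe that the isotropy group of an isolated singular point acts freely and orthogonally on the unit sphere $\mathbb{S}^{n-1}$, which has even dimension since $n$ is odd, hence the isotropy group has order two, and then invoke Theorem~\ref{thm:order2point}. You fill in more detail than the paper (deriving the freeness of the sphere action from the isolatedness via Lemma~\ref{lemma:fixed_point_set}, and giving the Euler-characteristic count explicitly rather than simply citing \cite{Wol:67Spa:aa}), but the structure of the argument is identical.
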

\begin{proof}
Assume $\mathcal{Q}=M/\Gamma$ and let $\tilde{x}\in M$ be a lift of an isolated singular point in $Q$.  The isotropy group $\Gamma_{\tilde{x}}$ of $\tilde{x}$ acts freely and orthogonally on the unit sphere in the tangent space $T_{\tilde{x}}M$, and this sphere has even dimension, since $M$ is odd-dimensional. Thus $\Gamma_{\tilde{x}}\simeq \Z_2$ is cyclic of order two, and the conclusion follows from Theorem \ref{thm:order2point}.
\end{proof}

In the light of Remark \ref{rem:reduction}, the above corollary shows that in the developable case, the problem of existence of closed geodesics can be reduced to orbifolds of even dimension and with zero-dimensional singular locus. Indeed, if $\mathcal{Q}$ is a compact developable $n$-orbifold and $k>0$ is the smallest singular dimension such that $\Sigma_k$ is not empty, then $\mathcal{Q}$ admits closed geodesics of positive length by Theorem \ref{thm:singular_1} when $k=1$, and by Theorem \ref{thm:closed_stratum} and Corollary \ref{cor:odd_zero} when $k>1$ and $k$ is odd.  If one could apply Theorem \ref{thm:closed_stratum} when $k$ is even, then the existence of closed geodesics would follow for all compact developable orbifolds. However, we note that for even-dimensional orbifolds, the isotropy group of an isolated singular point admits a free orthogonal action on a sphere which has odd dimension, and because, in this case, the order of the isotropy group need not be even (cf. \cite[Theorem 6.1.11]{Wol:67Spa:aa}), Theorem \ref{thm:order2point} no longer applies. While we are unable to prove Corollary \ref{cor:odd_zero} for all even dimensions, our next theorem still shows existence of closed geodesics on $\mathcal{Q}$ in certain cases when $k$ is even.

\begin{theorem}\label{thm:odd_strata}
Let $\mathcal{Q}$ be a compact connected effective Riemannian developable $n$-orbifold, and let $k>0$ be the smallest singular dimension such that $\Sigma_k\neq\varnothing$. There exists at least one closed geodesic of positive length on $\mathcal{Q}$ if either $k$ is odd or $k\ge n/2$ and $n-k$ is odd.
\end{theorem}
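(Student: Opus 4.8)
The plan is to reduce, via Theorem~\ref{thm:closed_stratum}, to a totally geodesic compact suborbifold of dimension exactly $k$ having at worst isolated singular points, and then to exploit the parity of $n-k$ at such a point. If $k=1$ the conclusion is immediate from Theorem~\ref{thm:singular_1} (and $k=1$ is odd), so assume $k\ge 2$; then $\Sigma_1=\varnothing$ and Theorem~\ref{thm:closed_stratum} applies. Pick a connected component $S\subseteq\Sigma_k$; if it is closed we are done by Theorem~\ref{thm:closed_component}, so assume it is open, whence $\fr(S)\subseteq\Sigma_0$ is a nonempty finite set. By Theorem~\ref{thm:closed_stratum}, $\cl(S)$ carries a compact $k$-orbifold structure $\mathcal{S}$, totally geodesic in $\mathcal{Q}$, whose reduction $\mathcal{S}_{\rm eff}$ is either a manifold or has only isolated singular points; moreover, writing $\mathcal{Q}=M/\Gamma$, $\mathcal{S}_{\rm eff}$ is the quotient of the complete totally geodesic $k$-submanifold ${\rm Fix}(\Gamma_S)\subseteq M$ by the induced proper cocompact action, so $\mathcal{S}_{\rm eff}$ is a compact developable $k$-orbifold. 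If $\mathcal{S}_{\rm eff}$ is a manifold, the Fet-Lusternik theorem~\cite{Lyu:51Var:aa} gives a closed geodesic of positive length on it, hence on $\mathcal{Q}$ by total geodesicity. So assume $\mathcal{S}_{\rm eff}$ has an isolated singular point $x$; fix a fundamental chart $(\widetilde U_x,\Gamma_x,\varphi_x)$ of $\mathcal{Q}$ at $x$ with lift $\tilde x$, and let $\Gamma'\le\Gamma_x$ be the isotropy along a component of $\varphi_x^{-1}(S)$ accumulating at $\tilde x$. Then $\Gamma'\cong\Gamma_S$, the set ${\rm Fix}(\Gamma')\subseteq\widetilde U_x$ is a totally geodesic $k$-submanifold through $\tilde x$ (Lemma~\ref{lemma:fixed_point_set}), and — by the proof of Theorem~\ref{thm:closed_stratum} — the isotropy group of $x$ in $\mathcal{S}_{\rm eff}$ is $G:=N_{\Gamma_x}(\Gamma')/\Gamma'$, acting freely on the unit sphere $\mathbb{S}^{k-1}$ in $T_{\tilde x}{\rm Fix}(\Gamma')$ (because $x$ is isolated among the singular points of $\mathcal{S}_{\rm eff}$).

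If $k$ is odd, then $\mathbb{S}^{k-1}$ is even-dimensional, so the nontrivial group $G$ acting freely on it must be $\Z_2$; thus $\mathcal{S}_{\rm eff}$ is a compact developable orbifold of odd dimension $k$ with an isolated singular point, and Corollary~\ref{cor:odd_zero} (or directly Theorem~\ref{thm:order2point}, since $|G|=2$) produces a closed geodesic of positive length on $\mathcal{S}_{\rm eff}$, hence on $\mathcal{Q}$. Now suppose $k$ is even; then $n-k$ is odd and $k\ge n/2$, so $n$ is odd, $n\ne 2k$, and therefore $k>n/2$, i.e.\ $n-k<k$. Decompose $T_{\tilde x}\widetilde U_x=\R^n=\R^k\oplus W$, where $\R^k={\rm Fix}_{T_{\tilde x}}(\Gamma')$ is the tangent space to ${\rm Fix}(\Gamma')$ and $W=(\R^k)^{\perp}$, $\dim W=n-k$. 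With $N:=N_{\Gamma_x}(\Gamma')$, this decomposition is $N$-invariant, $\Gamma'=\ker(N\to O(\R^k))$, and $N/\Gamma'=G$ acts freely on $\mathbb{S}^{k-1}\subset\R^k$. The key linear facts, which I would deduce from the minimality of $k$, are: \emph{(i)} $N$ acts faithfully on $W$, and \emph{(ii)} every $g\in N\smallsetminus\Gamma'$ acts on $W$ without nonzero fixed vectors. Indeed, if $g\in\Gamma'$ then $g|_{\R^k}={\rm id}$, whereas if $g\in N\smallsetminus\Gamma'$ then $g|_{\R^k}$ is fixed-point-free; so if $g$ violated (i) or (ii), the fixed subspace ${\rm Fix}_{T_{\tilde x}}(g)$ would be contained in $W$, of some dimension $d$ with $1\le d\le n-k<k$, making the totally geodesic submanifold ${\rm Fix}(g)\subseteq\widetilde U_x$ of dimension $d$; its generic points have singular dimension $d$ (the pointwise stabilizer $L$ of ${\rm Fix}(g)$ satisfies ${\rm Fix}(L)={\rm Fix}(g)$, and a generic point of ${\rm Fix}(g)$ has isotropy exactly $L$), contradicting $\Sigma_\ell=\varnothing$ for $0<\ell<k$.

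The decisive step then uses crucially that $\dim W=n-k$ is \emph{odd}: for $g\in N\smallsetminus\Gamma'$, the transformation $g|_W\in O(W)$ has a real eigenvalue, which by (ii) is not $+1$, so $-1$ is an eigenvalue of $g|_W$; if the order $m$ of $\bar g$ in $G$ were $\ge 3$ then $g^2\notin\Gamma'$, so by (ii) $(g|_W)^2$ has no nonzero fixed vector, contradicting its eigenvalue $(-1)^2=1$. Hence every nontrivial element of $G$ has order $2$, so $G$ is elementary abelian; as a group acting freely on a sphere cannot contain $\Z_2\times\Z_2$ \cite{Wol:67Spa:aa}, and $G\ne 1$ because $x$ is singular in $\mathcal{S}_{\rm eff}$, we conclude $G\cong\Z_2$. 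Therefore $\mathcal{S}_{\rm eff}$ is a compact developable $k$-orbifold with an isolated singular point of even-order isotropy, and Theorem~\ref{thm:order2point} yields a closed geodesic of positive length on $\mathcal{S}_{\rm eff}$, which by total geodesicity of $\mathcal{S}$ in $\mathcal{Q}$ is a closed geodesic of positive length on $\mathcal{Q}$.

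I expect the main obstacle to be the bookkeeping behind (i)--(ii) rather than the eigenvalue argument: one must verify that ${\rm Fix}(g)$, viewed in the uniformizing chart, maps onto a genuine singular stratum of $\mathcal{Q}$ of the stated positive dimension $<k$ (this is where the genericity of the isotropy along a fixed-point set, as in Remark~\ref{rem:strata}, is used), and that $\mathcal{S}_{\rm eff}$ really is developable with isolated singular points. A secondary point requiring care is the passage from a closed geodesic on $\mathcal{S}_{\rm eff}$ (equivalently, on the non-effective orbifold $\mathcal{S}$) back to $\mathcal{Q}$: because the charts and changes of charts of $\mathcal{S}$ are restrictions of those of $\mathcal{Q}$, the closing condition and the length carry over, but this should be spelled out in the framework of \S\ \ref{ssec:paths}.
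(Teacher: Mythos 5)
Your proof is correct and follows essentially the same route as the paper: reduce to the totally geodesic developable $k$-suborbifold $\mathcal{S}_{\rm eff}$ via Theorem~\ref{thm:closed_stratum} (with the $k$ odd case handled immediately by the free action on $\mathbb{S}^{k-1}$), then at an isolated singular point decompose $T_{\tilde{x}}\widetilde{U}_x=\R^k\oplus W$ and use the free action of elements of $N\smallsetminus\Gamma'$ on the sphere in the \emph{odd}-dimensional $W$ to force the isotropy $G$ of $\mathcal{S}_{\rm eff}$ at $x$ to be $\Z_2$, so that Theorem~\ref{thm:order2point} applies. You make two small but welcome improvements in precision over the paper's exposition: you work directly with $N=N_{\Gamma_x}(\Gamma')$ rather than first establishing (via $k>n/2$) that the local component $\widetilde{S}$ is unique and $\Gamma_S$ is normal in $\Gamma_x$; and you correctly deduce only that $g^2\in\Gamma'$ (so $\bar{g}$ has order at most $2$ in $G$) from the eigenvalue $(-1)^2=1$ of $(g|_W)^2$ together with the fixed-point-free action on $W\smallsetminus\{0\}$, whereas the paper states somewhat loosely that $\gamma$ itself has order two, from which $G\cong\Z_2$ then follows by the elementary-abelian/Wolf argument as you write it.
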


\begin{proof} Let $\mathcal{Q}=M/\Gamma$, with $M$ simply connected Riemannian manifold and $\Gamma\subset{\rm Isom}(M)$ acting geometrically on $M$ with orbifold quotient $\mathcal{Q}$. Let $S\subseteq\Sigma_k$ be a connected component of the singular stratum of smallest positive dimension $k$. If $S$ is closed, the conclusion follows from Theorem \ref{thm:closed_component}. For the rest of the proof, we will assume that $S$ is not closed.

If $k=1$, the existence of closed geodesics on $\mathcal{Q}$ is given by Theorem \ref{thm:singular_1}. If $k>1$, then by Theorem \ref{thm:closed_stratum} the closure ${\rm cl}(S)$ has the structure of a compact orbifold $\mathcal{S}$ which is totally geodesic in $\mathcal{Q}$, and the associated effective orbifold $\mathcal{S}_{\rm eff}$ has only zero-dimensional singular locus or is a smooth manifold. In the latter case, by Lusternik and Fet \cite{Lyu:51Var:aa}, $\mathcal{S}_{\rm eff}$ admits closed geodesics of positive length and these geodesics give rise to closed orbifold geodesics in $\mathcal{Q}$. Assume now that $\mathcal{S}_{\rm eff}$ is a nontrivial orbifold. Since $\mathcal{Q}$ is developable, the orbifold $\mathcal{S}_{\rm eff}$ is developable and $\mathcal{S}_{\rm eff}=N/\Gamma'$, where $N$ is a totally geodesic $k$-submanifold of $M$. 

Let $x\in {\rm fr}(S)$ be a singular point in $\mathcal{S}_{\rm eff}$ and let $\tilde{x}\in N$ be a lift of $x$. Let $\Gamma'_x$ be the isotropy group of $x$ in $\mathcal{S}_{\rm eff}$. Since $x$ is an isolated singular point in $\mathcal{S}_{\rm eff}$, the group $\Gamma'_x$ acts linearly by isometries on the tangent space $T_{\tilde{x}}N$ fixing only the origin.  Thus $\Gamma'_x$ acts freely and orthogonally on the unit sphere $\mathbb{S}^{k-1}$ in $T_{\tilde{x}}N$. 

If $k$ is odd, then $\Gamma'_x\simeq\Z_2$ is cyclic of order two, and the existence of a closed geodesic of positive length on $\mathcal{S}_{\rm eff}$, and hence on $\mathcal{Q}$, follows from Theorem \ref{thm:order2point}.

Assume now that $k\ge n/2$ and $n-k$ is odd. Let $(\widetilde{U}_x,\Gamma_x,\varphi_x)$ be a fundamental $n$-orbifold chart at $x$ (i.e. with respect to the orbifold structure $\mathcal{Q}$). If $k>n/2$, then there is a unique connected component $\widetilde{S}\subset\widetilde{U}_x$ that projects via $\varphi_x$ to $S\cap \varphi_x(\widetilde{U}_x)$. Indeed,  if $\widetilde{S}'$ is a different such component, then the intersection $\widetilde{S}'\cap\widetilde{S}$ is nonempty (since it contains $x$) and, because both components have dimension $k>n/2$, $\widetilde{S}'\cap\widetilde{S}$ gives rise to a singular component in $\mathcal{Q}$ of positive dimension which is less than $k$. This contradicts the fact that $k$ is the smallest positive singular dimension in $\mathcal{Q}$, and proves that the component $\widetilde{S}$ is unique. Let $\Gamma_S$ be the subgroup of $\Gamma_x$ such that $\widetilde{S}\cup\tilde{x}={\rm Fix}(\Gamma_S)$. Then $\Gamma_S$ is normal in $\Gamma_x$ and $\Gamma'_x\simeq \Gamma_x/\Gamma_S$. Note that if $\gamma\in \Gamma_x\smallsetminus\Gamma_S$, then $\gamma$ acts freely and orthogonally on the unit $(n-k-1)$-sphere in the orthogonal complement $T_{\tilde{x}}\widetilde{S}^\perp$. Since $n-k-1$ is even, $\gamma$ has order two and thus $\Gamma_x'$ is cyclic of order two. Finally, if $k=n/2$ and $n-k$ is odd, then $k$ is odd and, as in the first part of the proof, the group $\Gamma_x'$ is cyclic of order two. Again, by Theorem \ref{thm:order2point}, there exists a closed geodesic of positive length in $\mathcal{S}_{\rm eff}$, and therefore in $\mathcal{Q}$. 
\end{proof}

As an application, we investigate the existence of closed geodesics in low dimensional orbifolds. We first note that if $\mathcal{Q}$ is a two-dimensional compact developable orbifold, then $\mathcal{Q}$ is finitely covered by a compact manifold $M$ (cf. \cite[Theorem 2.5]{Sco:83The:aa}) and, because any closed geodesic in $M$ gives rise to a closed geodesic in $\mathcal{Q}$, the orbifold $\mathcal{Q}$ admits closed geodesics of positive length. Together with the result of \cite{Gur:06Clo:aa} for non-developable orbifolds, this proves the following.

\begin{prop}\label{prop:dimension2}
All compact $2$-dimensional orbifolds admit at least one closed geodesic of positive length.
\end{prop}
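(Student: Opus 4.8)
The plan is to fix a Riemannian structure on the $2$-orbifold $\mathcal{Q}$ (which exists by Remark~\ref{rem:riemann_orb}) and then treat the developable and non-developable cases separately. If $\mathcal{Q}$ is not developable, then in dimension two it is one of the bad orbifolds — a teardrop or a spindle $\Z_p\shy\Z_q$ with $p\neq q$ — and the existence of a closed geodesic of positive length is exactly the theorem of Guruprasad and Haefliger \cite{Gur:06Clo:aa} for non-developable orbifolds, so nothing further is needed in that case.

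Suppose now that $\mathcal{Q}$ is developable. By Scott's theorem \cite[Theorem~2.5]{Sco:83The:aa}, a compact developable $2$-orbifold is finitely covered, in the orbifold sense, by a closed surface; that is, there is an orbifold covering map $p\colon M\to Q$ whose total space $M$ is a compact $2$-manifold (if $\mathcal{Q}$ is already a manifold one may take $M=Q$). Pull back the Riemannian metric of $\mathcal{Q}$ along $p$, so that $M$ becomes a closed Riemannian surface and $p$ is a local Riemannian isometry. By the Lyusternik--Fet theorem \cite{Lyu:51Var:aa}, $M$ admits a nontrivial closed geodesic $\tilde c$. Because $p$ is a local isometry, the composition $c=p\circ\tilde c$ is a closed orbifold geodesic on $\mathcal{Q}$ in the sense of Definition~\ref{def:geodesic}: choosing a fine enough subdivision of the domain, the restrictions of $\tilde c$ together with the local inverses of $p$ furnish a representative geodesic path over $c$, and the closing-up condition on $\tilde c$ transports to the closing-up condition for $c$. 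Its length equals that of $\tilde c$, hence is positive. Together with the non-developable case, this proves the proposition.

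The argument is a reduction to two known results, so I do not expect any serious obstacle: Scott's theorem supplies the closed-manifold cover and Lyusternik--Fet supplies a closed geodesic on it. The only point worth a line of care is that a nontrivial closed geodesic pushes forward along an orbifold covering map to a nontrivial closed \emph{orbifold} geodesic; this follows at once from the fact that an orbifold covering is, in suitable charts, a covering of manifolds and in the Riemannian setting a local isometry, so that lengths are preserved and the geodesic-path data of Definition~\ref{def:geodesic} transports intact.
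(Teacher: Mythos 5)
Your proof is correct and follows exactly the route of the paper: in the non-developable case it invokes Guruprasad--Haefliger, and in the developable case it uses Scott's theorem to pass to a compact surface cover and then Lyusternik--Fet to produce a closed geodesic that pushes forward.
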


We next use Theorem \ref{thm:odd_strata} to prove the existence of closed geodesics on compact developable orbifolds in dimensions 3, 5 and 7. Again, because of the result of \cite{Gur:06Clo:aa} for non-developable orbifolds, we have the following. 
 
\begin{cor}\label{cor:dimension_3_5}
If $\mathcal{Q}$ is a compact orbifold with $\dim(\mathcal{Q})$ equal to 3, 5 or 7, then $\mathcal{Q}$ admits a closed geodesic of positive length. 
\end{cor}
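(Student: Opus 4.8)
The plan is to reduce to the developable case and then run through the finitely many possible values of the smallest positive singular dimension. By \cite{Gur:06Clo:aa}, every compact non-developable orbifold has a closed geodesic of positive length, so it suffices to treat the developable case; after passing to a connected component and using Remark \ref{rem:riemann_orb} to fix a Riemannian metric, I may assume $\mathcal{Q}$ is a compact connected effective developable Riemannian $n$-orbifold with $n\in\{3,5,7\}$. If the singular locus $\Sigma$ is empty, then $\mathcal{Q}$ is a compact Riemannian manifold and the Lusternik--Fet theorem \cite{Lyu:51Var:aa} applies; otherwise let $k$ be the smallest positive integer with $\Sigma_k\neq\varnothing$, so that $1\le k\le n-1$.

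Next I would apply Theorem \ref{thm:odd_strata}, which yields a closed geodesic of positive length on $\mathcal{Q}$ whenever $k$ is odd, and also whenever $k\ge n/2$ with $n-k$ odd. Since $n$ is odd, for even $k$ the number $n-k$ is automatically odd, so Theorem \ref{thm:odd_strata} covers every even $k\ge n/2$ as well. Running through the cases: for $n=3$ the admissible values $k\in\{1,2\}$ are both covered ($k=1$ is odd, and $k=2\ge 3/2$ with $n-k=1$ odd); for $n=5$ the values $k\in\{1,3\}$ are odd and $k=4\ge 5/2$ is covered, leaving only $k=2$; and for $n=7$ the values $k\in\{1,3,5\}$ are odd and $k\in\{4,6\}$ satisfy $k\ge 7/2$, again leaving only $k=2$. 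Thus everything reduces to the single case $n\in\{5,7\}$ and $k=2$.

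For that case $\Sigma_1=\varnothing$, so I would invoke Theorem \ref{thm:closed_stratum}: a connected component $S\subseteq\Sigma_2$ has closure $\cl(S)$ carrying a natural structure of a compact $2$-orbifold $\mathcal{S}$, totally geodesic in $\mathcal{Q}$, whose effectivization $\mathcal{S}_{\rm eff}$ is again a compact $2$-orbifold (a closed surface with at most finitely many isolated cone points). By Proposition \ref{prop:dimension2}, $\mathcal{S}_{\rm eff}$ admits a closed geodesic of positive length; lifting it to the non-effective orbifold $\mathcal{S}$ (which has the same underlying metric space and geodesics of the same lengths) and using that $\mathcal{S}$ is totally geodesic in $\mathcal{Q}$, one obtains a closed geodesic of positive length in $\mathcal{Q}$, in the spirit of Remark \ref{rem:reduction}. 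This completes the argument.

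All of the steps are short applications of results already established, so I do not expect any computational difficulty; the one delicate point is the case $k=2$, where Theorem \ref{thm:odd_strata} genuinely does not apply. Here it is essential that the totally geodesic $2$-suborbifold $\mathcal{S}_{\rm eff}$ need not be developable -- it could be a teardrop or a spindle -- so the full strength of Proposition \ref{prop:dimension2} (valid for all compact $2$-orbifolds, not merely developable ones) is needed, rather than just Lusternik--Fet or the developable $2$-orbifold case.
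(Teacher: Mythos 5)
Your proposal is correct and follows essentially the same route as the paper: reduce to the developable case via Guruprasad--Haefliger, handle $\Sigma=\varnothing$ by Lusternik--Fet, and then run the case analysis on the smallest positive singular dimension $k$, invoking Theorem~\ref{thm:odd_strata} for most values of $k$ and Theorem~\ref{thm:closed_stratum} together with Proposition~\ref{prop:dimension2} for the one remaining case $k=2$ when $n\in\{5,7\}$. Two small remarks. First, your bound $1\le k\le n-1$ is not quite right: if $\Sigma\neq\varnothing$ but $\Sigma_\ell=\varnothing$ for all $1\le\ell\le n-1$, then $\Sigma_0\neq\varnothing$ and the smallest positive singular dimension with $\Sigma_k\neq\varnothing$ is $k=n$ (since $\Sigma_n=Q_{\rm reg}$ is always nonempty); this happens precisely when the singular locus is zero-dimensional. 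Your argument still covers this case, since $k=n$ is odd for $n\in\{3,5,7\}$ and Theorem~\ref{thm:odd_strata} applies, but the bound as written excludes it. Second, your closing observation that the totally geodesic $2$-suborbifold $\mathcal{S}_{\rm eff}$ ``need not be developable'' and could be a teardrop or spindle is at odds with what the paper actually asserts: in the proof of Theorem~\ref{thm:odd_strata} the author states that when $\mathcal{Q}$ is developable, $\mathcal{S}_{\rm eff}$ is developable as well, being the quotient $N/\Gamma'$ of a totally geodesic submanifold $N$ of the universal cover $M$. (Indeed, the closure in $M$ of a connected component of $\pi^{-1}(S)$ is a complete totally geodesic submanifold, and the restricted $\Gamma$-action uniformizes $\mathcal{S}_{\rm eff}$.) So in the developable case, teardrops and spindles cannot arise as $\mathcal{S}_{\rm eff}$, and the developable case of Proposition~\ref{prop:dimension2} would suffice; your citation of the full proposition is of course still valid, just not as essential as you suggest.
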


\begin{proof} 
By Lyusternik and Fet \cite{Lyu:51Var:aa}, we can assume that the singular locus is nonempty, so $\Sigma\neq\varnothing$, and by part (a) of Theorem 5.1.1 of Guruprasad and Haefliger \cite{Gur:06Clo:aa}, we can assume that $\mathcal{Q}$ is developable. Let $Q=\bigsqcup\limits_{\ell=0}^n\Sigma_\ell$ be the natural stratification by singular dimension. By Corollary \ref{cor:no_zero_singularity}, we can assume that $\Sigma_0\neq\varnothing$. Let $k>0$ denote the smallest singular dimension such that $\Sigma_k\neq\varnothing$

If $\dim(\mathcal{Q})=3$ and $k=1,2$ or $3$, the conclusion follows from Theorem \ref{thm:odd_strata}. 

If $\dim(\mathcal{Q})=5$ or $7$ and $k=2$ the conclusion follows from Proposition \ref{prop:dimension2}. For all the other possible values $k$ the conclusion follows from Theorem \ref{thm:odd_strata}. \end{proof}

Finally, recall the following existence result for compact developable orbifolds (cf.   \cite[Theorem 5.1.1]{Gur:06Clo:aa}, \cite[Proposition 2.16]{Ale:11On-:aa} or \cite[Theorem 2]{Dra:14Clo:aa}).

\begin{theorem}\label{thm:existence1}
A developable compact connected Riemannian orbifold $\mathcal{Q}$ has a closed geodesic of positive length if the orbifold fundamental group $\pi_1^{orb}(\mathcal{Q})$ is finite or if it contains a hyperbolic element.
\end{theorem}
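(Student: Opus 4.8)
The plan is to split along the hypothesis: write $\mathcal{Q}=M/\Gamma$ with $M$ a complete simply connected Riemannian manifold, $\Gamma\subseteq{\rm Isom}(M)$ acting geometrically, and $\Gamma\cong\pi_1^{orb}(\mathcal{Q})$, and to treat the cases ``$\Gamma$ finite'' and ``$\Gamma$ contains a hyperbolic element'' separately. Recall from \S\ref{ssec:paths} (and as already used in the proof of Theorem~\ref{thm:order2point}) that a non-constant geodesic segment $\tilde c\colon[0,1]\to M$ together with an element $g\in\Gamma$ satisfying $g\tilde c(1)=\tilde c(0)$ and $g\dot{\tilde c}(1)=\dot{\tilde c}(0)$ represents a closed orbifold geodesic of positive length on $\mathcal{Q}$; in particular any closed geodesic of $M$ projects to one on $\mathcal{Q}$ (take $g=1$). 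If $\Gamma$ is finite, then the quotient map $M\to Q$ is proper and $Q$ is compact, so $M$ is compact; by the theorem of Lyusternik and Fet \cite{Lyu:51Var:aa}, $M$ carries a closed geodesic of positive length, and this case is done.

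Next suppose $\gamma\in\Gamma$ is hyperbolic, i.e.\ acts on $M$ without fixed points, and consider the displacement function $d_\gamma(x)=d(x,\gamma x)$ (with $d$ the Riemannian distance on $M$) and $\ell:=\inf_{x\in M}d_\gamma(x)$. The first step is to show that $\ell>0$ and is realized, at some conjugate $g$ of $\gamma$. Fix a compact set $K$ with $\Gamma K=M$ and a minimizing sequence $x_n$ for $d_\gamma$; choosing $\delta_n\in\Gamma$ with $y_n:=\delta_nx_n\in K$ and putting $\gamma_n:=\delta_n\gamma\delta_n^{-1}$, one has $d(y_n,\gamma_ny_n)=d_\gamma(x_n)\to\ell$, so the points $y_n$ and $\gamma_ny_n$ eventually lie in a single fixed compact set. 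Properness of the action then forces $\{\gamma_n\}$ to be finite, so after passing to a subsequence $\gamma_n\equiv g$ and $y_n\to y\in K$, whence $d(y,gy)=\ell$; since $d_g$ and $d_\gamma$ differ by precomposition with an isometry they have the same infimum, so $y$ minimizes $d_g$. As $g$ is conjugate to $\gamma$ it is still fixed-point free, hence $\ell=d_g(y)>0$.

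To conclude, let $\tilde c\colon[0,\ell]\to M$ be a unit-speed minimizing geodesic from $y$ to $gy$ (which exists since $M$ is complete). Because $y$ minimizes the displacement function $d_g$, the first variation formula for arc length, applied to a smooth variation of $\tilde c$ whose initial endpoint moves along an arbitrary $v\in T_yM$ and whose terminal endpoint therefore moves along $dg_y(v)$, yields $\langle dg_y(v),\dot{\tilde c}(\ell)\rangle=\langle v,\dot{\tilde c}(0)\rangle$ for every $v$, and hence $dg_y(\dot{\tilde c}(0))=\dot{\tilde c}(\ell)$. Reparametrizing $\tilde c$ on $[0,1]$ and setting $h=g^{-1}\in\Gamma$ then gives $h\tilde c(1)=\tilde c(0)$ and $h\dot{\tilde c}(1)=\dot{\tilde c}(0)$, so $(\tilde c,h)$ is a closed orbifold geodesic of length $\ell>0$ on $\mathcal{Q}$.

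The main obstacle is the middle paragraph: producing an honest minimizer of the displacement function. A fixed-point-free isometry need not realize its minimal displacement in general (parabolic isometries of hyperbolic space are the usual counterexample), so cocompactness is indispensable and must be combined with properness in just the right way --- cocompactness pushes a minimizing sequence into a bounded region, and properness then guarantees that only finitely many conjugates of $\gamma$ occur, allowing a convergent subsequence with a fixed group element. Once the minimizer is secured, the closing-up is routine first-variation calculus; the only point requiring care there is that the distance function may fail to be smooth at conjugate points, which is avoided by fixing one minimizing geodesic and varying only its initial endpoint.
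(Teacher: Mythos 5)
The paper itself does not prove Theorem~\ref{thm:existence1}: it is stated as a recalled result, citing \cite{Gur:06Clo:aa}, \cite{Ale:11On-:aa}, and \cite{Dra:14Clo:aa}, so there is no in-paper argument to compare against. Your proof is correct and is the standard one for this result. The finite case is immediate: cocompactness forces $M$ to be compact, Lyusternik--Fet \cite{Lyu:51Var:aa} supplies a closed geodesic in $M$, and the pair $(\tilde{c},1)$ is a closed orbifold geodesic of positive length on $\mathcal{Q}$. For the hyperbolic case, the real issue is exactly the one you identify --- a fixed-point-free isometry need not attain its infimal displacement in general --- and your handling is right: cocompactness translates a minimizing sequence into a fixed compact $K$, so $y_n$ and $\gamma_n y_n$ eventually lie in a slightly larger compact $K'$; properness then confines the conjugates $\gamma_n=\delta_n\gamma\delta_n^{-1}$ to the finite set $\{\gamma'\in\Gamma\mid \gamma' K'\cap K'\neq\varnothing\}$; and a convergent subsequence produces a single conjugate $g$ of $\gamma$ together with a point $y$ at which the displacement of $g$ attains its infimum $\ell$, with $\ell>0$ because $g$, like $\gamma$, is fixed-point free. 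The first-variation argument closing up a minimizing geodesic $\tilde{c}$ from $y$ to $gy$ is standard and correctly yields $dg_y(\dot{\tilde{c}}(0))=\dot{\tilde{c}}(\ell)$, so $(\tilde{c},g^{-1})$ is a closed orbifold geodesic of length $\ell$ in the sign convention of \S\ref{ssec:paths}.
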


The only situation not covered by Theorem \ref{thm:existence1} is when $\Gamma$ is infinite and each of its elements is an elliptic isometry. 
 Since elliptic isometries have finite order, $\Gamma$ is an {\it infinite torsion group}. Moreover, since the action is cocompact, $\Gamma$ is finitely presented and has finitely many conjugacy classes of isotropy groups. The latter implies that $\Gamma$ has finite exponent. While examples of infinite torsion groups that are finitely generated and even of finite exponent are known to exist, there are no examples known to be finitely presentable (as also noted in \cite[Remark 5.1.2]{Gur:06Clo:aa}).
 
\begin{rem}\label{rem:reduction_good} 
In the light of Theorem \ref{thm:existence1}, our approach reduces the problem of existence of closed geodesics to the case of even dimensional compact developable orbifolds with zero-dimensional singular locus and orbifold fundamental group infinite torsion and of finite odd exponent. 
\end{rem}
In \cite[Chapter 5]{Dra:11Clo:aa}, we present various group-theoretic properties that one can assume about the orbifold fundamental group of orbifolds like in Remark \ref{rem:reduction_good}. The simplest such case  would be in dimension four. In this case, each isotropy group admits a free action on the three sphere and thus it must be cyclic (of odd order).

\bibliography{/Users/George/Dropbox/BibReader/references}
\bibliographystyle{amsplain} 

\end{document}